\newcommand{\scr}[1]{\mathscr{#1}}
\DeclareFontFamily{U}{wncy}{}
    \DeclareFontShape{U}{wncy}{m}{n}{<->wncyr10}{}
    \DeclareSymbolFont{mcy}{U}{wncy}{m}{n}
    \DeclareMathSymbol{\Sha}{\mathord}{mcy}{"58}
    \DeclareMathSymbol{\Den}{\mathord}{mcy}{"44}
    \DeclareMathSymbol{\Num}{\mathord}{mcy}{"4E}
\theoremstyle{definition}
\newtheorem{theorem}{Theorem}[section]
\newtheorem{lemma}[theorem]{Lemma}
\newtheorem{corollary}[theorem]{Corollary}
\newtheorem{proposition}[theorem]{Proposition}
\newtheorem{definition}[theorem]{Definition}
\newtheorem{example}[theorem]{Example}
\newtheorem{remark}[theorem]{Remark}
\newtheorem{question}[theorem]{Question}
\newtheorem{situation}[theorem]{Situation}
\newcommand{\colim}{\text{colim}}
\newcommand{\OO}{\mathcal{O}}
\newcommand{\AAut}{\mathrm{Aut}}
\newcommand{\Spec}{\mathrm{Spec}\,}
\newcommand{\ZZ}{\mathbb{Z}}
\newcommand{\Aff}{\mathbb{A}}
\newcommand{\Sch}[1]{(Sch/#1)}
\newcommand{\cal}[1]{\mathcal{#1}}
\renewcommand{\frak}[1]{\mathfrak{#1}}
\newcommand{\localindex}{{\mathcal{i}}}
\newcommand{\indexsheaf}{{\mathcal{I}}}
\newcommand{\classicalindex}{{\mathrm{Index}}}
\definecolor{sebgreen1}{rgb}{0.019,0.317,0.149}
\definecolor{sebgreen2}{rgb}{0.784,0.952,0.780}
\renewcommand{\epsilon}{\varepsilon}
\newcommand{\e}{\epsilon}
\renewcommand{\AA}{\mathbb{A}}
\newcommand{\CC}{\mathbb{C}}
\newcommand{\FF}{\mathbb{F}}
\newcommand{\GG}{\mathbb{G}}
\newcommand{\NN}{\mathbb{N}}
\newcommand{\PP}{\mathbb{P}}
\newcommand{\QQ}{\mathbb{Q}}
\newcommand{\stquot}[1]{{\left[ #1 \right]}}
\renewcommand{\tilde}[1]{\widetilde{#1}}
\renewcommand{\cong}{\simeq}
\newcommand{\id}{\mathrm{id}}
\newcommand{\action}{\:\rotatebox[origin=c]{-90}{$\circlearrowright$}\:}
\newcommand{\Hilb}{\text{Hilb}}
\newcommand{\Pic}{\text{Pic}}
\newcommand{\Hom}{\text{Hom}}
\newcommand{\HHom}{\underline{\text{Hom}}}
\newcommand{\Gen}{\mathcal{M}}
\newcommand{\NGen}{\mathcal{N}}
\newcommand{\Sym}[1]{\text{Sym}^{#1}}
\newcommand{\WR}{\cal R}
\newcommand{\adj}[1]{\llbracket #1 \rrbracket}
\newcommand{\GL}{\text{GL}}
\newcommand{\pb}{\ar[phantom, dr, very near start, "\ulcorner"]}
\newcommand{\jetsp}[1]{J_{#1}}
\newcommand{\Tr}{\operatorname{Tr}} 
\newcommand{\Ints}{\mathbb{Z}} 
\newcommand{\vct}[1]{\vec{#1}}
\newcommand{\disc}{\operatorname{Disc}}
\title[The Scheme of Monogenic Generators I: Representability]{The Scheme of Monogenic Generators I: Representability}
\author{Sarah Arpin$^1$, Sebastian Bozlee$^2$, Leo Herr$^3$, Hanson Smith$^4$}
\date{\today}
\keywords{}
\subjclass[2020]{Primary 14D20, Secondary 11R04, 13E15}
\email{sarah.arpin@colorado.edu}
\email{sebastian.bozlee@tufts.edu}
\email{herr@math.utah.edu}
\email{hanson.smith@uconn.edu}
\address{$^1$University of Colorado Boulder, $^2$Tufts University, $^3$University of Utah, $^4$University of Connecticut}
\begin{document}

\maketitle

\sloppy

\begin{abstract}
     This is the first in a series of two papers that study monogenicity of number rings from a moduli-theoretic perspective. Given an extension of algebras $B/A$, when is $B$ generated by a single element $\theta \in B$ over $A$? In this paper, we show there is a \textit{scheme} $\mathcal{M}_{B/A}$ parameterizing the choice of a generator $\theta \in B$, a ``moduli space'' of generators. This scheme relates naturally to Hilbert schemes and configuration spaces. We give explicit equations and ample examples.
\end{abstract}

\tableofcontents

\section{Introduction}\label{sec: Intro}

The theorem of the primitive element states that given a finite separable field extension $L/K$, there is an element $\theta$ of $L$ such that $L = K(\theta)$. This holds for any extension of number fields $L/K$. By contrast, the extension of their rings of integers $\Ints_L/\Ints_K $ may require up to $\lceil \log_2([L : K]) \rceil$ elements of $\Ints_L$ to generate $\Ints_L$ as a $\Ints_K$-algebra \cite{Pleasants}. 

\begin{question}\label{MotivatingQuestion}
Which extensions $\Ints_L / \Ints_K$ are generated by a single element over $\Ints_K$? More generally, which finite locally free algebras $B /A$ are generated by a single element over $A$? How many elements does it take to generate $B$ over $A$ otherwise? 
\end{question}

\begin{definition}
A finite locally free $A$-algebra $B$ is \emph{monogenic}\footnote{The literature often uses the phrase ``$L$ is monogenic over $K$" to mean $\Ints_L/\Ints_K$ is monogenic as above. We prefer ``$\Ints_L$ is monogenic over $\Ints_K$" in order to treat fields and more exotic rings uniformly. If $B$ is monogenic of degree $n$ over $A$ with monogenerator $\theta$, the elements $\{1, \theta, \theta^2, \dots, \theta^{n-1}\}$ are elsewhere referred to as a ``power $A$-integral basis.'' } if there is an element $\theta \in B$ such that $B = A[\theta]$. The element $\theta$
is called a \emph{monogenic generator} or \emph{monogenerator} of $B$ over $A$.

If there are elements $\theta_1, \ldots, \theta_k \in B$ such that $B = A[\theta_1, \ldots, \theta_k]$, then $B/A$ is \emph{$k$-genic} and $(\theta_1, \ldots, \theta_k)$ is a \emph{generating $k$-tuple}.
\end{definition}

\medskip

This paper is motivated by the observation that monogenicity\footnote{`Monogeneity' is also common in the literature.} of an algebra can be restated geometrically:

\begin{remark} \label{rmk:monogenerator_equals_closed_embedding}
Let $A \subseteq B$ be an inclusion of rings. A choice of element $\theta \in B$ is equivalent to a commutative triangle
\begin{equation}\label{eqn:commtrianglemaptoA1}
\begin{tikzcd}
\Spec B \ar[rr, dashed, "s_\theta"] \ar[dr]         &       &\Aff^1_A, \ar[dl]       \\
        &\Spec A
\end{tikzcd}
\end{equation}
where $s_\theta$ is the map induced by the $A$-algebra homomorphism $A[t] \to B$ taking $t$ to $\theta$. 
\begin{center}
\emph{The element $\theta$ is a monogenerator if and only if the map $s_\theta$ is a closed immersion.}
\end{center}

Likewise, a $k$-tuple $\vec{\theta} = (\theta_1, \dots, \theta_k) \in B^k$ determines a corresponding map $s_{\vec{\theta}}: \Spec B \to \Aff^k_A$ induced by the $A$-algebra homomorphism $A[t_1, \ldots, t_k] \to B$ taking $t_i \mapsto \theta_i$. The tuple $(\theta_1, \ldots, \theta_k)$ generates $B$ over $A$ if and only if $s_{\vec{\theta}}$ is a closed immersion.
\end{remark}

We ask if there is a scheme that represents such commutative triangles as in moduli theory \cite[Chapter 1]{fgaexplained}. We prove that for any Noetherian ring $A$ and finite locally-free $A$-algebra $B$, there is a representing scheme $\Gen_{B/A}$ over $\Spec A$ called the \emph{scheme of monogenerators}. There is an analogous \emph{scheme of generating $k$-tuples} or \emph{scheme of $k$-generators} denoted by $\Gen_{k, B/A}$. When the extension $B/A$ is implied, we simply write $\Gen$ or $\Gen_k$ as appropriate. 

\begin{theorem}
Let $B / A$ be a finite locally free extension of Noetherian rings. There is an \emph{affine} $A$-scheme $\Gen_{B/A}$ and finite type quasiaffine $A$-schemes $\Gen_{k, B/A}$, for $k \geq 1$, with natural bijections
\[
  \Hom_{\Sch{A}}(\Spec A, \Gen_{B/A}) \cong \left\{ \theta \in B \mid \theta \text{ is a monogenerator for } B \text{ over }A \right\}
\]
and
\[
  \Hom_{\Sch{A}}(\Spec A, \Gen_{k, B/A}) \cong \left\{ \vec{\theta} \in B^k \mid \vec{\theta} \text{ is a generating $k$-tuple for }B \text{ over }A \right\}.
\]
\end{theorem}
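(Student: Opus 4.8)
The plan is to realize $\Gen_{k,B/A}$ as an open subscheme of the affine scheme parametrizing \emph{all} $k$-tuples, and to sharpen "open" to "affine" in the case $k=1$. Throughout, write $B_T = B \otimes_A \mathcal{O}_T$ for a scheme $T \to \Spec A$, and let $n$ denote the (locally constant) rank of $B$.

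First I would construct the scheme representing the functor that forgets the generating condition, namely $T \mapsto \Gamma(T, B_T)^k$, the set of all $k$-tuples of elements of $B_T$. This is represented by the linear scheme $\VV := \SSpec{A}{\mathrm{Sym}_A((B^{\oplus k})^\vee)}$, the total space of the vector bundle attached to $B^{\oplus k}$; since $A$ is Noetherian and $B$ is finite locally free, $(B^{\oplus k})^\vee$ is finite locally free and $\mathrm{Sym}_A$ of it is a finitely generated $A$-algebra, so $\VV$ is affine and of finite type over $A$. Over $\VV$ there is a tautological $k$-tuple $\vec{\theta}^{\mathrm{univ}}$ inducing a universal map of $\mathcal{O}_{\VV}$-algebras $\mathcal{O}_{\VV}[t_1,\dots,t_k] \to B_{\VV}$, and a morphism $T \to \VV$ corresponds exactly to a $k$-tuple on $T$. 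By Remark~\ref{rmk:monogenerator_equals_closed_embedding} the tuple generates if and only if this algebra map (pulled back to $T$) is surjective, so it remains to show that the surjective locus is representable by a suitable subscheme.

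Next I would show that generation is an open condition on $\VV$. The crucial reduction is that finite local freeness turns the relevant algebra map into a \emph{finite} module map: applying Cayley--Hamilton to multiplication by $\theta_i^{\mathrm{univ}}$, each power $\theta_i^m$ with $m \geq n$ lies in the $\mathcal{O}_{\VV}$-span of $1,\theta_i,\dots,\theta_i^{n-1}$, so the image of $\mathcal{O}_{\VV}[t_\bullet] \to B_{\VV}$ equals the image of the $\mathcal{O}_{\VV}$-module map
\[ \psi : \mathcal{O}_{\VV}^{\oplus n^k} \to B_{\VV} \]
whose components are the monomials $\theta^{\vec{a}}$ with $\vec{a} \in \{0,\dots,n-1\}^k$. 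Surjectivity of the algebra map is thus equivalent to $\operatorname{coker}(\psi) = 0$; since $\VV$ is Noetherian and $\operatorname{coker}(\psi)$ is coherent, its support is closed, so the generating locus $U_k := \VV \setminus \mathrm{Supp}(\operatorname{coker}\psi)$ is open. An open subscheme of a Noetherian affine scheme is quasi-affine and of finite type, yielding $\Gen_{k,B/A} = U_k$ with the asserted properties. For $k=1$ I would improve this to affineness: the image is now spanned by exactly the $n$ elements $1,\theta,\dots,\theta^{n-1}$, so $\psi:\mathcal{O}_{\VV}^{\oplus n}\to B_{\VV}$ is a map of locally free sheaves of equal rank $n$, surjective iff an isomorphism iff its determinant $\Delta := \Lambda^n\psi$, a section of the line bundle $\Lambda^n B_{\VV}$, is nowhere vanishing. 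Covering $\Spec A$ by opens $\Spec A_i$ over which $B$ is free makes $\Delta|_{A_i}$ an honest polynomial, so $\Gen|_{A_i} = D(\Delta|_{A_i})$ is affine over $A_i$; since being an affine morphism is local on the target, $\Gen_{B/A} \to \Spec A$ is affine, and as $\Spec A$ is affine, so is $\Gen_{B/A}$.

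I expect the main obstacle to be the second step: passing from surjectivity of a map of \emph{algebras}, whose source is infinite-dimensional, to an honest open condition. This is exactly where finite local freeness is indispensable, since it lets Cayley--Hamilton truncate to the finitely many monomials of multidegree below $n$ and reduces the question to the well-behaved openness of the vanishing of a coherent cokernel. The only other delicate point is the affineness claim for $k=1$ when $B$ is locally free but not free, where $\Delta$ is a section of a possibly nontrivial line bundle rather than a global function; this is why I would invoke locality of affineness on the base (equivalently, that the non-vanishing locus of a section of a line bundle on an affine scheme is affine) instead of naively exhibiting $\Gen_{B/A}$ as a single distinguished open. The bijection on $A$-points then follows by specializing the universal property to $T = \Spec A$, and the dichotomy between the affine ($k=1$) and merely quasi-affine ($k\geq 2$) conclusions is explained by the fact that for $k \geq 2$ generation becomes "some $n\times n$ minor among the $n^k$ monomials is a unit," a union of principal opens that need not be affine.
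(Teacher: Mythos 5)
Your proposal is correct and is essentially the paper's own argument: the paper likewise identifies the space of all $k$-tuples with the total space of the vector bundle attached to $(\pi_*\OO_{S'})^{\oplus k}$ (the Weil restriction $\WR_k$), truncates the universal algebra map by the canonical degree-$n$ monic polynomial of Lemma \ref{lem:minpoly} (your Cayley--Hamilton step), and for $k=1$ cuts out $\Gen_1$ as the nonvanishing locus of the determinant of the resulting $n\times n$ matrix of coefficients --- the local index form of Definition \ref{def:local_index_form} --- deducing affineness exactly as you do from the fact that affineness is local on the target (Proposition \ref{prop:gen1_equations} and Corollary \ref{thm:mon_is_affine}), while for $k\geq 2$ it takes the union of the distinguished opens $D(\det M_C)$ over $n\times n$ minors (Proposition \ref{prop:genk_equations}), which is exactly your complement of $\operatorname{Supp}(\operatorname{coker}\psi)$. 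The only divergence is bookkeeping: the paper first establishes openness of $\Gen_k \subseteq \WR_k$ by citing the general fact that the closed-immersion locus of a proper, flat, finitely presented family is open (Proposition \ref{prop:genisrep}), whereas you re-derive this special case by hand via coherence of the cokernel (plus the implicit Nakayama/fiberwise argument identifying the open set with the subfunctor); both are valid.
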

The problem of finding generating $k$-tuples of $\Ints_L / \Ints_K$ is thereby identified with that of finding $\Ints_K$-points of the schemes $\Gen_{k, \Ints_L/\Ints_K}$. The functors $\Hom(-, \Gen_k)$ automatically form sheaves in the fpqc topology (and therefore also in coarser topologies). This permits monogenicity to be studied locally, a perspective that we will pursue further in the second paper in this series.

The extensions of rings $B/A$ under consideration are much more general than extensions of number rings, and there is no difficulty in extending the construction of $\Gen_{B/A}$ to maps of schemes $S' \to S$ locally of the form $\Spec B \to \Spec A$ with $B$ finite locally free over $A$. For example, a finite map of algebraic curves $C \to D$ is of this form. We are therefore invited to view monogenicity in other contexts as related and analogous to monogenicity of number rings. 

Some of these other extensions of rings are nevertheless functorially related to extensions of number rings. For example, for every extension of number rings $\Ints_L / \Ints_K$, there is a map $\Spec \CC \to \Spec \Ints_K$. Pulling back $\Spec \Ints_L \to \Spec \Ints_K$ along this map yields $\Spec \CC^n \to \Spec \CC$, a trivial cover of a point. Pulling back $\Gen_{\Ints_L/\Ints_K}$ along the same map yields $\Gen_{\CC^n/\CC}$. This part of the monogenicity space is already interesting:

\begin{example}\label{ex:confspsintro}
Let $A = \CC$ and $B = \CC^n$. The complex points of the monogenicity space are naturally in bijection with the points of the configuration space of $n$ distinct points in $\CC$:
\[\Gen_{1, B/A}(\CC) \simeq \text{Conf}_n(\CC) \coloneqq  \{(x_1, \dots, x_n) \in \CC^n \, | \, x_i \neq x_j \text{ for }i \neq j\}.\]
\end{example}

Monogenicity therefore generalizes configuration spaces by conceiving of $B/A$ as ``families of points'' to be configured in $\Aff^1$.

\subsection{Outline of the paper}

Section \ref{s:gen} defines our main object of study $\Gen_{S'/S}$ for a finite locally free map $S' \to S$. This scheme parametrizes choices of monogenerators for the algebra extension $\OO_{S'}/\OO_S$. We offer basic properties, functoriality, examples, and a relation with the classical Hilbert scheme and the work of Poonen \cite{poonenmodspoffiniteflat}. 

In Section \ref{sec:indexformconstgen}, we obtain explicit affine charts for $\Gen_{S'/S}$ using local index forms and deduce affineness for $\Gen_{B/A}$. The index form tells whether a given section $\theta \in \OO_{S'}$ is a monogenerator for $ \OO_{S'}/\OO_S$ or not. We generalize to $k$-generators. 

Section \ref{s:examples} gives a variety of concrete examples of the scheme of monogenerators $\Gen$, including: separable and inseparable field extensions, a variety of orders in number fields including Dedekind's non-monogenic cubic, jet spaces, and completions. These explicit equations put into practice classical theory in addition to the theory we have built. We encourage the reader to consult these examples to complement the earlier sections.

\subsection{Summary of previous results}\label{previouswork}

The question of which rings of integers are monogenic was posed to the London Mathematical Society in the 1960's by Helmut Hasse. Hence the study of monogenicity is sometimes known as \textit{Hasse's problem}. For an in-depth look at monogenicity with a focus on algorithms for solving index form equations, see Ga\'al's book \cite{GaalsBook}.
Evertse and Gy\H{o}ry's book \cite{EvertseGyoryBook} provides background with a special focus on the relevant Diophantine equations. For another bibliography of monogenicity, see Narkiewicz's texts \cite[pages 79-81]{Nark} and \cite[pages 75-77]{AlgebraicStory}. 

The prototypical examples of number rings are monogenic over $\ZZ$, such as quadratic and cyclotomic rings of integers.
Dedekind \cite{Dedekind} produced the first example of a non-monogenic number ring (see Example~\ref{ex:Dedekind}). Dedekind used the splitting of the prime $2$ to show that the field obtained by adjoining a root of $x^3-x^2-2x-8$ to $\QQ$ is not monogenic over $\ZZ$. Hensel \cite{Hensel1894} showed that local obstructions to monogenicity come from primes whose splitting cannot be accommodated by the local factorization of polynomials in the sense of Dedekind-Kummer factorization. See \cite[Proposition III.12]{LocalFields} and also \cite{Pleasants} which is discussed briefly below. For very recent English translations of the original pioneering works, consult \cite{GWDedekind} and \cite{GWHensel}.

Global obstructions also preclude monogenicity. For a number field $L/\QQ$, the \emph{field index} is the greatest common divisor $\gcd_{\alpha\in \Ints_L} \left[\Ints_L:\ZZ[\alpha]\right]$. A number field $L$ can have field index 1 and $\ZZ_L$ may still not be monogenic, for example the ring of integers of $\QQ(\sqrt[3]{25\cdot7})$; see \cite[page 65]{Nark} or Example \ref{ex:2genicOverZ}. Define the \emph{minimal index} to be $\min_{\alpha\in \Ints_L} \left[\Ints_L:\ZZ[\alpha]\right]$. The monogenicity of $\ZZ_L/\ZZ$ is equivalent to having minimal index equal to 1. An early result of Hall \cite{Hall} shows that there exist cubic fields with arbitrarily large minimal indices. In \cite{SpearmanYangYoo}, this is generalized to show that every cube-free integer occurs as the minimal index of infinitely many radical cubic fields.

The monogenicity of a given extension of $\ZZ$ is encoded by a Diophantine equation called the \textit{index form equation}. 
Gy\H{o}ry made the initial breakthrough regarding the resolution of index form equations and related equations in the series of papers \cite{GyoryI}, \cite{GyoryII}, \cite{GyoryIII}, \cite{GyoryIV}, and \cite{GyoryV}. These papers investigate monogenicity and prove effective finiteness results for affine inequivalent monogenerators in a variety of number theoretic contexts. 
For inequivalent monogenic generators one should also consult \cite{EvertseGyory85}, \cite{MultiplyMono}, and the survey \cite{MonoSurvey}. 
Specializing families of polynomials to obtain monogenic extensions is investigated in \cite{Konig}. 
In large part due to the group in Debrecen, there is a vast literature involving relative monogenicity: \cite{GyorySeminar}, \cite{GyoryCrelle}, 
\cite{CubicRelative}, \cite{QuarticRelative}, \cite{QuartQuadRelative}, \cite{GaalRemeteSzabo}, \cite{GaalRemete0}, and \cite{GaalRemeteMonoRelative}.

Pleasants \cite{Pleasants} bounds the number of generators needed for a field of degree $n$ by $\lceil \log_2(n) \rceil$, with equality if 2 splits completely. This upper bound is a consequence of a precise description of exactly when an extension is locally $k$-genic in the sense that the completion at each prime is $k$-genic. For number rings, Pleasants answers the question of what the minimal positive integer $k>1$ is such that $\ZZ_L$ is $k$-genic. Global obstructions only appear in the case of monogenicity: for $k>1$, Pleasants shows that local $k$-geneity is equivalent to global $k$-geneity.

A related account is given in \cite[Chapter 11]{EvertseGyoryBook}, where it is shown that given an order $\mathfrak{O}$ of a finite \'etale $\QQ$-algebra, one can effectively compute the smallest $k$ such that $\mathfrak{O}$ is $k$-genic. In the spirit of the previous work of Gy\H{o}ry, one can also effectively compute the $k$ generators of $\mathfrak{O}$ over $\ZZ$.

Monogenicity has recently been viewed from the perspective of arithmetic statistics: Bhargava, Shankar, and Wang \cite{BSW} 
have shown that the density of monic, irreducible polynomials in $\ZZ[x]$ such that a root is a monogenerator is $\frac{6}{\pi^2}=\zeta(2)^{-1} \approx 60.79\%$. That is, about $61\%$ of monic, integer polynomials correspond to monogenerators. They also show the density of monic integer polynomials with square-free discriminants (a sufficient condition for a root to be a monogenerator) is 
\[\prod\limits_p \left(1-\dfrac{1}{p}+\dfrac{(p-1)^2}{p^2(p+1)}\right)\approx 35.82\%.\]
Thus these polynomials only account for slightly more than half of the polynomials yielding a monogenerator. Using elliptic curves, \cite{AlpogeBhargavaShnidman} shows that a positive proportion of cubic number fields are not monogenic despite having no local obstructions. More recently, the trio have undertaken a similar investigation for quartic fields \cite{ABSQuartic}. 
For quartic orders, Bhargava \cite{BhargavaQuarticOrders} also establishes a new upper bound on the number of essentially different monogenerators. In a pair of papers that investigate a variety of questions (\cite{SiadPartI} and \cite{SiadPartII}), Siad shows that monogenicity appears to increase the average amount of 2-torsion in the class group of number fields. In particular, monogenicity has a doubling effect on the average amount of 2-torsion in the class group of odd degree number fields. Previously,  \cite{BhargavaHankeShankar} had established this result in the case of cubic fields.

\subsection{Acknowledgements}

The third author would like to thank Gebhard Martin, the mathoverflow community for \cite{370972} and \cite{377840}, Tommaso de Fernex, Robert Hines, and Sam Molcho. Tommaso de Fernex looked over a draft and made helpful suggestions about jet spaces. The third author thanks the NSF for providing partial support by the RTG grant \#1840190. 

The fourth author would like to thank Henri Johnston and Tommy Hofmann for help with computing a particularly devious relative integral basis in Magma. All four authors would like to thank their graduate advisors Katherine E. Stange (first and fourth authors) and Jonathan Wise (second and third authors). This project grew out of the fourth author trying to explain his thesis to the second author in geometric terms.

For numerous computations throughout, we were very thankful to be able to employ Magma \cite{Magma} and SageMath \cite{Sage}. For a number of examples the \cite{lmfdb} was invaluable.


\section{The scheme of monogenic generators}\label{s:gen}

We now use Remark \ref{rmk:monogenerator_equals_closed_embedding} to construct the scheme of monogenic generators $\Gen_{S'/S}$, our geometric reinterpretation of the classical question of monogenicity.
Given any extension of number fields $L/K$, the map $S' \coloneqq \Spec \Ints_L \to S \coloneqq \Spec \Ints_K$ is finite locally free and $X = \Aff^k_{\Ints_K} \to \Spec \Ints_K$ is quasiprojective; these properties suffice for our purposes.
Leaving $X$ general permits analogues of monogenicity such as embeddings into $\PP^k$, which are more natural when $S' \to S$ is a map of proper varieties. We invite the reader to picture $S'$ as the ring of integers $\Spec \Ints_L$ in a number field, $S = \Spec \ZZ_K$, and $X$ as $\Aff^1$. We return to $X = \Aff^1$, $\Aff^k$ in Section \ref{sec:indexformconstgen}; we will see there that our approach recovers the well-known index form equations. 

\begin{situation}\label{sit:gensetup}
Let $\pi : S' \to S$ be a finite locally free morphism of constant degree $n \geq 1$ with $S$ locally noetherian. Consider a quasiprojective morphism $X \to S$ and write $X' \coloneqq  X \times_S S'$. 
\end{situation}

The constant degree assumption is for simplicity; the reader may remove it by working separately on each connected component of $S$.
In the sequel paper, we will occasionally allow $S$ to be an algebraic stack,
though the morphism $S' \to S$ will always be representable. Write $\Sch{S}$ for the category of $S$-schemes.
We now define $\Gen_{X,S'/S}$ by describing the functor of maps into it and then showing it is representable.

\begin{definition}\label{def:defnofmonogenweilrestn}
In Situation \ref{sit:gensetup}, consider the presheaf on $\Sch{S}$ which sends an $S$-scheme $T \to S$ to the set of morphisms $s$ fitting into the diagram:
\begin{equation}\label{eqn:weilrestnsections}
T \to S \mapsto 
\left\{\begin{tikzcd}
S'\times_S T \ar[rr, "s", dashed] \ar[dr]        &       &X \times_S T \ar[dl]       \\
        &T
\end{tikzcd}\right\}
\end{equation}
{\noindent}with restriction given by pullback.
Refer to this presheaf either as the \emph{relative hom presheaf} $\HHom_S(S', X')$ \cite[0D19]{sta} or the \emph{Weil Restriction} $\WR_{X', S'/S}$ \cite[\S 7.6]{neronmodels}. 

The \emph{sheaf of monogenerators} is the subpresheaf
\[
  \Gen_{X, S'/S} \subseteq \WR_{X', S'/S}
\]
whose sections are diagrams exactly as above, but with $s$ a closed immersion. 

We write $\Gen_{k, S'/S}, \WR_{k, S'/S}$ for $X = \Aff^k_S$,
$\Gen_{S'/S} = \Gen_{1, S'/S},$ and $\WR_{S'/S} = \WR_{1, S'/S}$. When $S' \to S$ or $X$ is understood, we may drop ``$S'/S$'' or ``$X$'' from the notation. If $S' = \Spec B$ and $S = \Spec A$ are affine, we
write $\Gen_{k, B/A}$ or $\Gen_{B/A}$ instead.
\end{definition}

Remark \ref{rmk:monogenerator_equals_closed_embedding} expresses the presheaf $\Gen_{k, S'/S}$ as
\[
  \Gen_{k, S'/S}(T) = \left\{ (\theta_1, \ldots, \theta_k) \in \Gamma(T \times_S S', \OO_{T \times_S S'})^{\oplus k} \mid \OO_{T \times_S S'} = \OO_{T}[\theta_1, \ldots, \theta_k] \right\}
\]
An $S$-point of $\Gen_1$ is said to be a \emph{monogenerator} of $S'/S$ and we say $S'/S$ is \emph{monogenic} if such a point exists. This recovers the definition of monogenicity of algebras when $S$ is affine. These presheaves are representable: 

\begin{proposition}\label{prop:genisrep}
The presheaves $\Gen_X \subseteq \WR_{X', S'/S}$ are both representable by quasiprojective $S$-schemes and the inclusion is a quasicompact open immersion. If $f : X \to S$ is smooth, unramified, or \'etale, the same is true for $\WR_{X', S'/S} \to S$ and $\Gen_X \to S$. 
\end{proposition}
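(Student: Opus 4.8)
The plan is to prove representability of the Weil restriction $\WR_{X', S'/S}$ first, and then deduce everything about $\Gen_X$ by showing the monogenerator condition cuts out a quasicompact open subfunctor. I would proceed as follows.

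First I would establish representability of $\WR_{X', S'/S}$ by a quasiprojective $S$-scheme. The standard reference here is the theory of Weil restriction along finite locally free morphisms (cf. \cite[\S 7.6]{neronmodels} and \cite[0D19]{sta}): since $\pi : S' \to S$ is finite locally free and $X \to S$ is quasiprojective, the relative hom functor $\HHom_S(S', X')$ is representable by a quasiprojective $S$-scheme. Concretely, the Weil restriction of a quasiprojective scheme along a finite locally free morphism exists and remains quasiprojective. I would quote the existence result from the literature rather than reprove it, perhaps after reducing to the affine base case by the fpqc descent/locality of the construction and checking that the quasiprojective structure glues.

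Next I would analyze the inclusion $\Gen_X \subseteq \WR_{X', S'/S}$. The key point is that over a $T$-point corresponding to a section $s : S' \times_S T \to X \times_S T$, being a closed immersion is an open condition on the base. More precisely, I would use the fact that the locus in $\WR$ where the universal section is a closed immersion is open: given a section $s$, properness or finiteness of $S' \times_S T \to T$ combined with the fact that $X' \to S'$ is separated lets one test the closed-immersion property fiberwise, and the set of points where a finite morphism is a closed immersion (equivalently, where the relevant sheaf map is surjective, i.e. where $\OO_{T \times_S S'}$ is generated by the $\theta_i$) is open by Nakayama and the finiteness of $S'/S$. This shows $\Gen_X \hookrightarrow \WR_{X', S'/S}$ is an open immersion; quasicompactness follows since $\WR$ is of finite type over the locally noetherian base $S$ (noetherian-ness makes every subscheme quasicompact), or can be checked directly on affine charts. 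Representability of $\Gen_X$ by a quasiprojective $S$-scheme is then immediate, as an open subscheme of a quasiprojective $S$-scheme is quasiprojective.

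Finally, for the preservation of smooth/unramified/\'etale, I would invoke the known permanence properties of Weil restriction along finite locally free morphisms: these classes of morphisms are stable under Weil restriction (again \cite[\S 7.6]{neronmodels}), so if $X \to S$ is smooth (resp. unramified, \'etale) then $\WR_{X', S'/S} \to S$ is as well. Since $\Gen_X \to S$ factors as the open immersion $\Gen_X \hookrightarrow \WR_{X', S'/S}$ followed by $\WR_{X', S'/S} \to S$, and open immersions are \'etale (hence smooth and unramified), the composite inherits the same property by stability under composition.

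The main obstacle I expect is the open-immersion claim for $\Gen_X \subseteq \WR$: one must carefully argue that the closed-immersion condition on the universal section is genuinely open on the base and not merely constructible. The cleanest route is to characterize closed immersions of finite $T$-schemes via surjectivity of the structure-sheaf map $\OO_{X' \times_S T} \to s_* \OO_{S' \times_S T}$, then apply the openness of the locus where a coherent module map is surjective (using finiteness to reduce to a statement about finitely generated modules and Nakayama's lemma). Getting this local criterion to interact correctly with the quasiprojective (rather than affine) target $X$ is the delicate bookkeeping step.
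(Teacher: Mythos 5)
Your proposal is correct and follows essentially the same route as the paper: representability and quasiprojectivity of $\WR_{X',S'/S}$ are quoted from the Weil-restriction literature, the closed-immersion locus is shown to be a quasicompact open subfunctor (quasicompactness coming from local noetherianness), and the smooth/unramified/\'etale statement follows from permanence of these properties under Weil restriction together with the fact that open immersions are \'etale. The only difference is cosmetic: where you sketch the openness argument by hand (surjectivity of the coherent structure-sheaf map, Nakayama, properness of the support), the paper simply cites \cite[05XA]{sta}, which is precisely that lemma.
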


\begin{proof}
The Weil restriction $\WR_{X', S'/S}$ is representable by a scheme quasiprojective over $S$ \cite[Theorem 1.3, Proposition 2.10]{weilrestnpatmcfaddin}. If $X' \to S'$ is a finite-type affine morphism, the same is true for $\WR_{X', S'/S} \to S$ by locally applying \cite[Proposition 2.2(1),(2)]{weilrestnpatmcfaddin}. The inclusion $\Gen_X \subseteq \WR_{X', S'/S}$ is open by \cite[05XA]{sta} and automatically quasicompact because $\WR_{X', S'/S}$ is locally noetherian. The second statement is immediate. 
\end{proof}

\begin{corollary}
The presheaves $\Gen_X, \WR_{X', S'/S}$ are sheaves in the Zariski, Nisnevich, \'etale, fppf, and fpqc topologies on $\Sch{S}$. 
\end{corollary}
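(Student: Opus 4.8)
The plan is to deduce the corollary from a single general principle rather than by checking the sheaf axiom five separate times. The key point is that among the five topologies listed, the fpqc topology is the finest, and it is \emph{subcanonical}: every representable functor on $\Sch{S}$ is an fpqc sheaf. Since Proposition \ref{prop:genisrep} has already established that $\Gen_X$ and $\WR_{X', S'/S}$ are representable by $S$-schemes, it therefore suffices to verify descent for the fpqc topology alone, and the rest is formal.

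First I would record the comparison of topologies on $\Sch{S}$: every Zariski covering is a Nisnevich covering, every Nisnevich covering is \'etale, every \'etale covering is an fppf covering, and every fppf covering is an fpqc covering. Hence the covering families of each of the four coarser topologies form a subcollection of the fpqc coverings, and any presheaf satisfying the gluing and separation axioms for all fpqc coverings automatically satisfies them for coverings in each coarser topology. This collapses the five claims to the single assertion that $\Gen_X$ and $\WR_{X', S'/S}$ are fpqc sheaves.

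Next I would invoke subcanonicity of the fpqc topology \cite[Theorem 2.55]{fgaexplained} (equivalently \cite{sta}): for any $S$-scheme $Y$, the functor $T \mapsto \Hom_{\Sch{S}}(T, Y)$ is a sheaf in the fpqc topology. Concretely, for an fpqc covering $\{T_i \to T\}$, giving a morphism $T \to Y$ is the same as giving a family of morphisms $T_i \to Y$ agreeing after pullback to each $T_i \times_T T_j$; this is precisely fpqc descent for morphisms into a fixed scheme. Applying this with $Y = \Gen_X$ and with $Y = \WR_{X', S'/S}$, both representable by Proposition \ref{prop:genisrep}, yields the corollary.

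The honest assessment is that there is no real obstacle remaining: all of the substantive content was absorbed into the representability statement of Proposition \ref{prop:genisrep}, and the corollary is a purely formal consequence of representability plus the subcanonicity of fpqc. The only point deserving a word of care is bookkeeping about sites — one must ensure the sheaf property is asserted on the \emph{big} sites (all $S$-schemes with the respective coverings), which is exactly the setting in which representability and the Weil restriction results of Proposition \ref{prop:genisrep} were phrased, so no compatibility issue arises.
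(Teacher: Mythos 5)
Your proposal is correct and is exactly the argument the paper intends: the corollary is stated without proof as an immediate consequence of Proposition \ref{prop:genisrep}, relying on the standard facts that representable functors are fpqc sheaves (subcanonicity) and that the other four topologies are coarser than fpqc. Your write-up simply makes explicit what the paper leaves implicit, with no difference in substance.
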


\begin{definition}\label{def:sheafmono}
Let $\tau$ be a subcanonical Grothendieck topology on schemes, for example the Zariski, Nisnevich, \'etale, fppf, or fpqc topologies.
We say that $S'/S$ is $\tau$-\emph{locally $k$-genic} if the sheaf $\Gen_{k, S'/S}$ is locally non-empty in the topology $\tau$.
I.e., there is a $\tau$-cover $\{ U_i \to S \}_{i \in I}$ of $S$ such that $\Gen_{k, S'/S}(U_i)$ is non-empty for all $i \in I$.
By default, we use the \'etale topology.
\end{definition}

A $k$-genic extension $S'/S$ is $\tau$-locally $k$-genic. If $\tau_1$ is a finer topology than $\tau_2$, then $\tau_2$-locally monogenic implies $\tau_1$-locally monogenic.

\begin{remark}
We pose a related moduli problem $\scr F$ in Section~\ref{ss:finflat+gen} that parameterizes a \emph{choice} of finite flat map $S' \to S$ together with a monogenerator. It is also representable by a scheme. The mere choice of a finite flat map $S' \to S$ is representable by an algebraic stack, as shown in \cite{poonenmodspoffiniteflat} and recalled in Section \ref{ssec:relation_to_Hilb}. 
\end{remark}

Our main example of $S' \to S$ comes from rings of integers in number fields $\Ints_L/\Ints_K$, but here is another:

\begin{example}\label{ex:jetsps}
Let $S = \Spec \ZZ$ and $S'_n = \Spec \ZZ[\e]/\e^n$. The Weil Restriction $\WR_{X', S'_n/S}$ is better known as the \textit{jet space} $\jetsp{X, n-1}$ \cite{vojtajets}. For any ring $A$, $(n-1)$-jets are maps
\[\Spec A[\e]/\e^n \to X.\]
Jet spaces are usually considered over a field $k$ by base changing from $S$. The monogenicity space $\Gen_{S'_n/S, X} \subseteq \jetsp{X, n-1}$ parametrizes \emph{embedded} $(n-1)$-jets, whose map $\Spec A[\e]/\e^n \subseteq X$ is a closed embedding. If $n=2$, $\jetsp{X, 1}$ is the Zariski tangent bundle and $\Gen_{X, S'_2/S}$ is the complement of the zero section. 

The truncation maps $\jetsp{X, n} \to \jetsp{X, n-1}$ restrict to maps $\Gen_{X, S'_{n+1}/S} \to \Gen_{X, S'_n/S}$. The inverse limit $\lim_n \jetsp{X, n}$ 
sends rings $A$ to maps called \textit{arcs}
\[\Spec A \adj{t} \to X\]
according to \cite[Remark 4.6, Theorem 4.1]{bhatttannakaalgn}. Under this identification, the limit $\lim_n \Gen_{X, S'_n/S}$ parametrizes those arcs that are closed immersions into $X \times \Spec A$. 

Compare embedded $(n-1)$-jets to ``regular'' ones. An $(n-1)$-jet $f : \Spec k[\e]/\e^{n} \to X$ over a field $k$ is called \emph{regular} \cite[\S 5]{demaillyhyperbolicjetspace} if $f'(0) \neq 0$. I.e., the truncation of higher order terms $\overline{f} : \Spec k[\e]/\e^2 \to X$ is a closed immersion. Regular $(n-1)$-jets are precisely the pullback $\Gen_{X, S'_1/S} \times_{\jetsp{1, X}} \jetsp{X, n}$. 
\end{example}

Extensions of number rings are generically \'etale, with a divisor of ramification. The finite flat map $S'_n \to S$ in jet spaces is the opposite, ramified everywhere.

\begin{remark}[Steinitz Classes]

We have assumed that $\pi : S' \to S$ is finite locally free of rank $n$, so $\pi_*\OO_{S'}$ is a locally free $\OO_S$-module of rank $n$. By taking an $n$th
exterior power, one obtains a locally free $\OO_S$-module 
\[
  \det \pi_*\OO_{S'} \coloneqq   \bigwedge^n \pi_*\OO_{S'}.
\]
of rank 1 \cite[Chapter II, Exercise 6.11]{hartshorne}.
The \emph{Steinitz class} of $\pi : S' \to S$ is the isomorphism class of $\det \pi_*\OO_{S'}$ in $\Pic(S)$.
\end{remark}

The Weil Restriction $\WR_{k, S'/S}$ is precisely the rank $kn$ vector bundle with sheaf of sections $\pi_*\OO_{S'}^k$. To see this, recall that the set of $T$-points of the $S$-scheme $\WR_{k, S'/S}$ is by definition the set of $T$-morphisms $\left\{ S'\times_S T \to \Aff^k_T \right\}$.
By the universal property of $\Aff^k_T$, such morphisms are in bijection with $k$-tuples of elements of $\Gamma(T, \OO_{S' \times_S T})$.
It follows that $\Hom_{S}(-, \WR_{k, S'/S}) \cong (\pi_*\OO_{S'})^k$ as quasicoherent sheaves on $S$. When $\pi_\ast \OO_{S'} \simeq \bigoplus^n \OO_S \cdot e_i$ is trivial, so is $\WR_{S'/S}$. 

\begin{example}
The Steinitz class of the jet space of $\Aff^k$ is the trivial vector bundle:
$J_{n, \Aff^k} = \Aff^{k(n+1)}$
\cite[Corollary 5.2]{vojtajets}
\end{example}

\begin{remark}
Although $\Aff^1$ and hence $\WR_{S'/S}$ are ring objects, $\Gen_1$ is neither closed under addition nor multiplication.
For addition, note that if $\theta \in \OO_{S'}$ is a generator, then so is $-\theta$, but $\theta + (-\theta) = 0$ is not for $n \neq 1$. 
\end{remark}

\begin{lemma} \label{lem:minpoly}
Let $\theta \in \Gamma(S', \OO_{S'})$ be any element. There is
a \emph{canonical} monic polynomial $m_\theta(t) \in \Gamma(S, \OO_S)[t]$ of degree $n$ such
that $m_\theta(\theta) = 0$. 
\end{lemma}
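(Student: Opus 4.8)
The plan is to take $m_\theta$ to be the characteristic polynomial of the ``multiplication by $\theta$'' endomorphism of the pushforward bundle $\pi_*\OO_{S'}$, and to deduce the relation $m_\theta(\theta) = 0$ from the Cayley--Hamilton theorem. First I would set $\Ecal \coloneqq \pi_*\OO_{S'}$, which by the hypothesis of Situation \ref{sit:gensetup} is a locally free $\OO_S$-module of rank $n$. An element $\theta \in \Gamma(S', \OO_{S'}) = \Gamma(S, \Ecal)$ acts by multiplication, giving an $\OO_S$-linear endomorphism $M_\theta \colon \Ecal \to \Ecal$ with $M_\theta^j(1) = \theta^j$ for all $j \geq 0$.

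Next I would define $m_\theta(t) \coloneqq \det\!\left(t\cdot\id - M_\theta\right)$, where this determinant is interpreted intrinsically as the determinant of the $\OO_S[t]$-linear endomorphism $t\cdot\id - M_\theta$ of the rank-$n$ locally free $\OO_S[t]$-module $\Ecal \otimes_{\OO_S} \OO_S[t]$. Because the determinant of an endomorphism of a finite locally free module is defined globally and canonically through its top exterior power, this produces a well-defined element $m_\theta(t) \in \Gamma(S, \OO_S)[t]$ with no choices involved; this is precisely the sense in which the polynomial is canonical. That $m_\theta$ is monic of degree $n$ may be checked locally on an affine open $U \subseteq S$ over which $\Ecal$ is free, where $M_\theta$ becomes an $n \times n$ matrix and $m_\theta|_U$ is its ordinary characteristic polynomial.

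Finally, to obtain $m_\theta(\theta) = 0$, I would invoke the Cayley--Hamilton theorem in its form valid over an arbitrary commutative ring, which gives the identity $m_\theta(M_\theta) = 0$ as an endomorphism of $\Ecal$; this equality can be verified locally on $S$ where $\Ecal$ is free and the classical statement applies. Evaluating the operator $m_\theta(M_\theta)$ on the section $1 \in \Gamma(S, \Ecal)$ and using $M_\theta^j(1) = \theta^j$ yields $m_\theta(\theta)\cdot 1 = 0$ in $\Gamma(S', \OO_{S'})$, hence $m_\theta(\theta) = 0$.

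The main obstacle is purely one of globalization and of working over a general base rather than a field: one must ensure that the locally-defined characteristic polynomials patch to genuine coefficients in $\Gamma(S, \OO_S)$ (equivalently, that the construction is independent of the local trivialization, i.e. invariant under change of basis by $\GL_n$), and that Cayley--Hamilton holds integrally over $\ZZ$ and thus over any base. The determinant-of-an-endomorphism formulation sidesteps the patching issue by being basis-free from the outset, and the universal (integral) form of Cayley--Hamilton handles the non-field base; with these two points in place, the remaining verifications are routine and entirely local.
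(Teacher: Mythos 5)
Your proposal is correct and is essentially the paper's own argument: the paper likewise defines $m_\theta(t)$ as the characteristic polynomial $\det(\delta_{ij}t - a_{ij})$ of the multiplication-by-$\theta$ action on a local basis of $\pi_*\OO_{S'}$, and deduces $m_\theta(\theta)=0$ by the determinant trick of Atiyah--Macdonald (Proposition 2.4), which is Cayley--Hamilton in substance. The only difference is cosmetic: you globalize by defining the determinant intrinsically via the top exterior power, while the paper observes that the local determinants are basis-independent and glues them by the sheaf property.
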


\begin{proof}
We begin by constructing $m_\theta(t)$ locally, following \cite[Proposition 2.4]{AtiyahMacdonald}.

Assume first that $\pi : S' \to S$ is such that $\pi_*\OO_{S'} \cong \OO_S^{\oplus n}$ as an $\OO_S$-module.
Choose a $\Gamma(S, \OO_S)$-basis $\{ x_1, \ldots, x_n \}$ of $\Gamma(S', \OO_{S'})$. For each $i = 1, \ldots, n$, write $\theta x_i = \sum_{j = 1}^n a_{ij} x_j$ where $a_{ij} \in \Gamma(S', \OO_{S'})$.
Now we let
\[
  m_\theta(t) = \det(\delta_{ij}t - a_{ij}).
\]
As in the proof of \cite[Proposition 2.4]{AtiyahMacdonald}, $m_\theta(t)$
has coefficients in $\Gamma(S, \mathscr{O}_S)$, is monic of degree $n$, and $m_\theta(\theta) = 0$. Moreover,
$m_\theta(t)$ does not depend on the basis chosen since $m_\theta(t)$ is
computed by a determinant.

Now for general $\pi: S' \to S$, choose an open cover $\{U_i\}$ of $S$
on which $\pi_*\OO_{S'}$ trivializes. On each open set, the construction of the previous paragraph yields a monic
polynomial $m_i(t) \in \OO_S(U_i)[t]$ of degree $n$ vanishing on $\theta|_{U_i}$.
Since the construction of the polynomials commutes with restriction and
is independent of choice of basis, we have
\[
  m_i(t)|_{U_i \cap U_j} = m_j(t)|_{U_i \cap U_j}.
\]
We conclude by the sheaf property.
\end{proof}

\begin{remark}\label{rmk:minpolymap}
Lemma \ref{lem:minpoly} defines a map $m : \WR_{S'/S} \to \Aff^n_S$ sending an element $\theta \in \Gamma(S', \OO_{S'})$ to the coefficients $(b_{n-1}, \dots, b_0)$ of the universal canonical monic minimal polynomial 
\[m_\theta(t) = t^n + b_{n-1}t^{n-1} + \cdots + b_0.\] 
The preimage of a point of $\Aff^n_S$ is the set of roots of the corresponding polynomial in $\OO_{S'}$.

In the case that $S' \to S$ is a trivial $n$-sheeted cover, i.e. $S' = S \sqcup \cdots \sqcup S$, we may trivialize the vector bundle $\WR_{S'/S} \cong \AA^n_S$ using the standard basis $\{ e_1 = (1, 0, \cdots, 0), e_2 = (0, 1, \cdots, 0), \ldots \}$ of $\pi_*\OO_S \cong \OO_S \times \cdots \OO_S$. It is easy to compute that $m_\theta(t) = \prod_{i = 1}^n (t - x_i)$ with respect to this basis, so that the coefficients $b_i$ are the elementary symmetric polynomials in the $x_i$. It follows that the map $m : \WR_{S'/S} \to \Aff^n_S$
is the coarse quotient space for the natural action of the symmetric group $\Sigma_n$ on the $\{e_i\}$-coordinates of $\WR_{S'/S}$.

If $S' \to S$ is \'etale, $S' \to S$ is \'etale locally a trivial cover as above. However, the $\Sigma_n$-action need not globalize. For example, consider the $3$-power map $\stquot{3} : \GG_m \to \GG_m$ for $S' \to S$. We will consider the situation of $S' \to S$ \'etale in more depth in the second paper of this series.
\end{remark}

\subsection{Functoriality of $\Gen_X$}

We recall that the Weil restriction is functorial in the following sense. Given a commutative square
\[
  \begin{tikzcd}
    T' \ar[r] \ar[d]        &S' \ar[d]        \\
    T \ar[r]         &S
  \end{tikzcd}
\]
and an $S$-scheme $X$ (but not assuming that these maps lie in Situation \ref{sit:gensetup}), there is a functorially associated map of $T$-sheaves
\[
  \WR_{X, S'/S} \times_S T \to \WR_{X_T, T'/T},
\]
where $X_T \coloneqq X \times_S T$. Let $f : T' \to S' \times_S T$ be the map induced by the universal property of pullback.

The construction is as follows. Let $U \to T$ be a $T$-scheme and let $U' = S' \times_S U$. A $(U \to T)$-point of $\WR_{X, S'/S} \times_S T$ then consists of
a morphism $s : U' \to X \times_S U$ over $U$.  Let $f|_U: T' \times_T U \to U'$ be the pullback of $f$ along the map $U' \to S' \times_S T$:
\[
  \begin{tikzcd}
    T' \times_T U \ar[r] \ar[d, "f|_U"'] \pb & T' \ar[rd] \ar[d, "f"] \\
    U' \ar[r] \ar[d] \pb & S' \times_S T \pb \ar[r] \ar[d] & S' \ar[d] \\
    U \ar[r] & T \ar[r] & S
  \end{tikzcd}
\]
Then the natural map $\WR_{X, S'/S} \times_S T \to \WR_{X_T, T'/T}$ on $(U \to T)$-points is given by
\[
  (U' \overset{s}{\longrightarrow} X \times_S U) \, \, \mapsto \, \, (T' \times_T U \overset{f|_U}{\longrightarrow} U' \overset{s}{\longrightarrow} X \times_S U \cong X_T \times_T U).
\]

Assume now that $S' \to S$ and $X$ are as in Situation \ref{sit:gensetup}. Observe that if $f$ is a closed immersion, then composition with $f|_U$ preserves closed immersions and we obtain a natural map
\[
  \Gen_{X, S'/S} \times_S T \to \Gen_{X_T, T'/T}.
\]
by restriction.

If $f$ is not a closed immersion, then in general composition with $f|_U$ will
not preserve closed immersions, so $\Gen_{X, S'/S}$ fails to be functorial in an obvious way for general commutative squares. Indeed, $\Gen_{X, S'/S}$ cannot be functorial for general commutative squares. For example, if $S = \Spec A$, $T \to S \leftarrow S'$ are identity maps and $T' \to T$ is non-monogenic then the global sections of $\Gen_{S'/S} \times_S T$ are $A$, while $\Gen_{T'/T}$ has no global sections, so there does not even exist a map $\Gen_{S'/S} \times_S T \to \Gen_{T'/T}$.

We highlight some special cases in which $f$ is a closed immersion:
\begin{enumerate}
  \item (Base change in $S$) If
  \[
    \begin{tikzcd}
      T' \ar[r] \ar[d]        &S' \ar[d]        \\
      T \ar[r]         &S
    \end{tikzcd}
  \]
  is cartesian, then $f : T' \to S' \times_S T$ is an isomorphism, so the natural map
  \[
    \Gen_{X, S'/S} \times_S T \to \Gen_{X_T, T'/T}
  \]
  is an isomorphism. We will use this later to compute $\Gen_{S'/S}$ Zariski locally
  on $S$ by letting $T \to S$ vary over sufficiently small open affines of $S$.

  \item (Functoriality in towers) If $S'' \to S'$ is finite locally free of constant rank, consider the commutative square
  \[
    \begin{tikzcd}
      S'' \ar[r, "\id_{S''}"] \ar[d]        &S'' \ar[d]        \\
      S' \ar[r]         &S.
    \end{tikzcd}
  \]
  The natural map $f : S'' \to S'' \times_S S'$ is proper since $S'' \to S'$ is proper and $S'' \times_S S' \to S'$ is separated. It is also a monomorphism since $S'' \overset{f}{\to} S'' \times_S S' \to S'' = \id_{S''}$
  is a monomorphism. Therefore $f$ is a closed immersion\cite[Tag 04XV]{sta}.
  
  We therefore have a natural morphism
  \[
    \Gen_{X, S''/S} \times S' \to \Gen_{X_{S'}, S''/S'}.
  \]
  Taking global sections with $X = \Aff^k_S$, it follows that if $S'' \to S$ is $k$-genic, then $S'' \to S'$ is $k$-genic. In particular, if $M/L/K$ is a tower of number fields and $\Ints_M / \Ints_K$ is $k$-genic, then $\Ints_M / \Ints_L$ is $k$-genic.
More prosaically, if $A \subseteq B \subseteq C$ and $C = A[\theta_1, \ldots, \theta_k]$, then $C = B[\theta_1, \ldots, \theta_k]$.
\end{enumerate}

Weil restrictions are also functorial in $X$, in the following sense. Let $S' \to S$ be
any morphism and let $g : X \to X'$ be a morphism of $S$-schemes. If $U \to S$ is a morphism, write $g|_U$ for the pulled-back map $X \times_S U \to X' \times_S U$. Then there is a map
\[
  \WR_{X, S'/S} \to \WR_{X', S'/S}
\]
given on $(U \to S)$-points by sending $s : S' \times_S U \to X \times_S U$ to $g|_U \circ s$. If $g$ is a closed immersion, then composition with $g|_U$ preserves closed immersions,
inducing a map
\[
  \Gen_{X, S'/S} \to \Gen_{X', S'/S}.
\]

\begin{remark}\label{rmk:disjunionprodweilrest}
If $S' = \bigsqcup S'_i$ is a finite disjoint union of finite locally free maps $S'_i \to S$, the pullback $T \times_S S'$ is the disjoint union $\bigsqcup T \times_S S'_i$. Write $X'_i = X' \times_S S'_i$, it follows from the universal property of coproducts and the above that the Weil Restriction decomposes as
\[
  \WR_{X', S'/S} = \prod_i \WR_{X'_i, S'_i/S}.
\]

The monogenicity space is \textit{not} the product $\Gen_{S'/S, X} \neq \prod \Gen_{S'_i/S, X}$. Rather, we claim a map
\[
  \bigsqcup S'_i \to X
\]
is a closed immersion if and only if each map
\[
  S'_i \to X
\]
is a closed immersion and the closed immersions are disjoint: 
\[
  S'_i \times_X S'_j = \varnothing
\]
for all $i \neq j$. 
To see the claim, we may check affine locally on $X$, where it reduces to the statement that
$A \to \prod_{i} B_i$ is surjective if and only if $A \to B_i$ is surjective for each $i$ and $B_i \otimes_A B_j = 0$
whenever $i \neq j$. This follows quickly in turn from the Chinese remainder theorem.


\end{remark}

\begin{remark}\label{rmk:pbmonnormnofmon}
Not only is $\Gen_{1, S'/S}$ functorial in $S$, but we show its normalization and reduction can be performed on $S$.

If $X \to S$ is smooth and $T \to S$ is the normalization of $S$, one uses \cite[03GV]{sta} and properties of $\Gen_{X, S'/S}$ in Proposition \ref{prop:genisrep} to see that $\Gen_{X_T, T'/T} \to \Gen_{X, S'/S}$ is also normalization. 


If a map $Y \to Z$ is smooth, the square 
\[\begin{tikzcd}
Y_{red} \ar[r, hook] \ar[d]         &Y \ar[d]      \\
Z_{red} \ar[r, hook]         &Z.
\end{tikzcd}\]
is cartesian. We need only check $Z_{red} \times_Z Y$ is reduced using \cite[034E]{sta}, since a surjective closed immersion of a reduced scheme must be $Y_{red}$. For smooth $X \to S$ and $T \coloneqq   S_{red}$, the pullback $\Gen_{X, S'/S} \times_S S_{red} \simeq \Gen_{X_{S_{red}}, T'/S_{red}}$ is the reduction of $\Gen_{X, S'/S}$.

\end{remark}

\subsection{Relation to the Hilbert scheme}\label{ssec:relation_to_Hilb}

With this section we pause our development of $\Gen$ to relate our construction to other well-known objects. Though the rest of the paper does not use this section, it behooves us to situate our work in the existing literature.

At the possible cost of representability of $\Gen_{X,S'/S}$, let $X \to S$ be any morphism of schemes in this section. Recall the Hilbert scheme of points \cite[0B94]{sta}
\[\Hilb^n_{X/S}(T) \coloneqq   \big\{ \text{closed embeddings } Z \subseteq X \times_S T \, \, \big| \, \, Z \to T \text{ finite locally free deg. } n \big\}.\]

There is an algebraic moduli stack $\frak A_n$ of finite locally free maps of degree $n$ \cite[Definition 3.2]{poonenmodspoffiniteflat}, with universal finite flat map $\frak Z_n \to \frak A_n$. Any map $S' \to S$ in Situation \ref{sit:gensetup} is pulled back from $\frak Z_n \to \frak A_n$. We restrict to $S$-schemes without further mention: $\frak A_n = \frak A_n \times S$. 

Recall Poonen's description of $\frak A_n$: A finite locally free map $\pi : Z \to T$ is equivalent to the data of a finite locally free $\OO_T$-algebra $\cal Q$ given by $\pi_\ast \OO_Z$. Suppose for the sake of exposition that a locally free algebra $\cal Q$ has a global basis $\cal Q \simeq \OO_T^{\oplus n}$. 
The algebra structure is a multiplication map
\[\cal Q^{\otimes 2} \to \cal Q\]
that can be written as a matrix using the basis. Conditions of associativity and commutativity are polynomial on the entries of this matrix. We get an affine scheme of finite type $\frak B_n$ parametrizing matrices satisfying the polynomial conditions, or equivalently multiplication laws on globally free finite modules \cite[Proposition 1.1]{poonenmodspoffiniteflat}. Two different choices of global basis $\OO_T^{\oplus n} \simeq \cal Q \simeq \OO^{\oplus n}_T$ differ by an element of $\GL_n(\OO_T)$. Taking the stack quotient by this action $\GL_n \action \frak B_n$ erases the need for a global basis and gives $\frak A_n$.

There is a map $\Hilb^n_{X/S} \to \frak A_n$ sending a closed embedding $Z \subseteq X|_T$ to the finite flat map $Z \to T$. The fibers of this map are exactly monogenicity spaces:
\[\begin{tikzcd}
\Gen_{X, S'/S} \ar[r] \ar[d] \pb     &S    \ar[d, "S'/S"]    \\
\Hilb^n_{X/S} \ar[r]       &\frak A_n.
\end{tikzcd}\]
Conversely, the monogenicity space of the universal finite flat map $\frak Z_n \to \frak A$ is isomorphic to the Hilbert Scheme
\[\Gen_{X, \frak Z_n/\frak A_n} \simeq \Hilb^n_{X/S}\]
over $\frak A$. The space $\frak h_n(\Aff^k)$ of \cite[\S 4]{poonenmodspoffiniteflat} is $\Gen_k$ for the universal finite flat map to $\frak B_n$. 

Proposition \ref{prop:genisrep} shows $\Gen_{S'/S}(X) \to S$ is smooth for $X$ smooth, and likewise for unramified or \'etale. This means the map $\Hilb^n_{X/S} \to \frak A_n$ is smooth, unramified, or \'etale if $X \to S$ is.

Suppose $X \to S$ flat to identify the Chow variety of dimension 0, degree $n$ subvarieties of $X$ with $\Sym n X$ \cite{rydhfamiliesofcycles} and take $S$ equidimensional. The Hilbert-Chow morphism $\Hilb_{X/S}^{equi \: n} \to \Sym n X$ sending a finite, flat, equidimensional $Z \to S$ to the pushforward of its fundamental class $[Z]$ in Chow $A_\ast(X)$ restricts to $\Gen_{X, S'/S}$. When $S'/S$ is \'etale, we will see the restriction of the Hilbert-Chow morphism $\Gen_{X, S'/S} \to \Sym n X$ is an open embedding by hand in the sequel paper.

\begin{question}
Can known cohomology computations of $\Hilb^n_{X/S}$ offer obstructions to monogenicity under this relationship? 
\end{question}


\section{The local index form and construction of {$\Gen$}}\label{sec:indexformconstgen}


This section describes equations for the monogenicity space $\Gen_{1, S'/S}$ inside $\WR = \HHom_S(S',\Aff^1)$ by working with the universal homomorphism over $\WR$. In the classical case $\ZZ_L/\ZZ_K$ we recover the well-known \emph{index form equation}. Section \ref{subsec: firstexamples} gives examples, while \ref{ssec:genk_equations} generalizes the equations to $k$-geneity $\Gen_{k, S'/S}$.

\begin{remark}[Representable functors]
Recall that if a functor $F : \mathcal{C}^{op} \to \mathrm{Set}$ is represented by an object $X$, then there is an element $\xi$ of $F(X)$
corresponding to the identity morphism $X \overset{\id}{\to} X$, called the \emph{universal element} of $F$. The proof of the Yoneda lemma shows that for all objects $Y$ and elements $y \in F(Y)$, there is a morphism $f_y : Y \to X$ such that $y$ is obtained by applying $F(f_y)$
to $\xi$. 
\end{remark}

For a map $f : X \to Y$ of schemes, we write $f^\sharp : \OO_Y \to \OO_X|_Y$ for the map of sheaves and its kin.

\subsection{Explicit equations for the scheme $\Gen$}\label{ssec:gen1_equations}

The scheme $\WR = \WR_{S'/S} = \HHom_S(S', \Aff^1)$ is a ``moduli space'' of maps $S' \to \Aff^1$. For any $T \to S$, every morphism $S'\times_S T \to \Aff^1_T$ is pulled back along some $T \to \WR$ from the \textit{universal homomorphism}
\[\begin{tikzcd}
S' \times_S \WR \ar[dr] \ar[rr, "u"]   &       &\Aff^1_{\WR}. \ar[dl]       \\
        &\WR
\end{tikzcd}\]
We want explicit equations for $\Gen_{S'/S} \subseteq \WR$.

Let $t$ be the coordinate function on $\AA^1$. The map $u$ corresponds to an element $\theta = u^\sharp(t) \in \Gamma(\OO_{S' \times_S \WR})$. Let $m(t)$ be the polynomial of Lemma \ref{lem:minpoly} for $\theta$, i.e. $m(t)$ is a monic polynomial in $\Gamma(\OO_\WR)[t]$ of degree $n$ such that $m(\theta) = 0$.

\begin{definition}
We call the polynomial $m(t)$ the \emph{universal minimal polynomial} of $\theta$.
Let $V(m(t))$ be the closed subscheme of $\AA^1_\WR$ cut out by $m(t)$.
\end{definition}

The universal map $u$ factors through this closed subscheme: 
\begin{equation}
\label{eq:univ_factorization_mon}
\begin{tikzcd}
S' \times_S \WR \ar[r, "v"] \ar[dr, "\pi", swap]       &V(m(t)) \ar[d, "\tau"] \ar[r, hook]     &\Aff^1_\WR. \ar[dl]       \\
        &\WR
\end{tikzcd}
\end{equation}
Since $V(m(t)) \to \AA^1_{\WR}$ is a closed immersion, the locus in $\WR$ over which $u$ restricts
to a closed immersion agrees with the locus over which $v$ is a closed immersion. 

\begin{remark}
The map $V(m(t)) \to \WR$ is finite \emph{globally} free $\tau_\ast \OO_{V(m(t))} \simeq \bigoplus_{i=0}^{n-1} \OO_\WR \cdot t^i$. The map $v : S' \times_S \WR \to V(m(t))$ comes from a map
\[v^\sharp : \tau_\ast \OO_{V(m(t))} \to \pi_\ast \OO_{S' \times_S \WR}\]
of finite locally free $\OO_\WR$-modules. Locally, it is an $n \times n$ \emph{matrix}. The determinant of this matrix is a unit when it is full rank, i.e. when $v$ is a closed immersion. The $i$th column is $\theta^i$, written out in terms of the local basis of $\pi_\ast \OO_{S' \times_S \WR}$. We work this out explicitly to get equations for $\Gen \subseteq \WR$. 
\end{remark}

Remark \ref{rmk:pbmonnormnofmon} lets us find equations locally. 
Suppose $S' = \Spec B$ and $S = \Spec A$, where $B = \bigoplus_{i=1}^n A \cdot e_i$ is a finite free $A$-algebra of rank $n$ with basis $e_1, \ldots, e_n$. Let $I = \{ 1, \ldots, n \}$.
Write $t$ for the coordinate function of $\mathbb{A}^1$ and write $x_I$ as shorthand for $n$ variables $x_i$ indexed by $i \in I$.

The scheme $\WR = \HHom_S(S', \mathbb{A}^1)$ is the affine scheme $\Aff^n_S = \Spec A[x_I]$ and Diagram \eqref{eq:univ_factorization_mon} becomes
\[
\begin{tikzcd}
B[x_I]      &A[x_I, t]/(m(t)) \ar[l, "v^\sharp", swap]       &A[x_I, t] \ar[l]      \\
        &A[x_I] \ar[ur] \ar[ul, "\pi^\sharp"] \ar[u, "\tau^\sharp"]
\end{tikzcd}
\]
The $A[x_I]$-homomorphism $v^\sharp$ sends
\[
  t \mapsto \theta \coloneqq   x_1 e_1 + \cdots + x_n e_n.
\]

Note that $A[x_I, t]/(m(t))$ has an $A[x_I]$ basis given by the equivalence classes of $1, t, \dots, t^{n-1}$ and $B[x_I]$ has an $A[x_I]$-basis given by $e_1, \ldots, e_n$. With respect to these bases, $v^\sharp$ is represented by the \emph{matrix of coefficients}
\begin{equation}\label{eq:MatrixOfCoeffs}
  M(e_1, \ldots, e_n) = [ a_{ij} ]_{1\leq i,j\leq n} .
\end{equation}
where $a_{ij} \in A[x_I]$ are the unique coefficients such that $\theta^{j - 1} = \sum_{i = 1}^n a_{ij}e_i$ for each $j = 1, \ldots, n$.

\begin{example}\label{ex:etale} Let $S'\coloneqq  \Spec \CC^n \to S\coloneqq  \Spec \CC $ as in Example~\ref{ex:confspsintro}. Let $e_1,e_2,\dots,e_n$ be the standard basis vectors of $\CC^n$. The monogenerators of $S'$ over $S$ are precisely the closed immersions:

\begin{center}
\begin{tikzcd}
S' \ar[dr] \ar[rr, hook, dashed]      &       & \Aff^1_{S}\arrow[dl]\\
&S
\end{tikzcd}
\end{center}

Identify $\WR \simeq \Aff^n_S$. Let $t$ denote the coordinate of $\Aff^1_{\Aff^n_S}$, and let $x_1,x_2,...,x_n$ denote the coordinates of $\Aff^n_{S}$. The analogue of Diagram \eqref{eq:univ_factorization_mon} for this case is:

\begin{center}
\begin{tikzcd}
S' \times_S \Aff^n_S \ar[r] \ar[dr]
&V(m(t)) \ar[d] \ar[r, hook]         &\Aff^1_{\Aff^n_S} \ar[dl]      \\
        &\Aff^n_S
\end{tikzcd}    
\end{center}

We write down $m_{\theta}(t)$, as in Lemma~\ref{lem:minpoly}. The coordinate $t$ of $\Aff^1_{\Aff^n_S}$ maps to the universal element $\theta = x_1e_1 + x_2e_2 + \cdots + x_ne_n$. Since $e_i = (\delta_{ij})_{j=1}^n \in \CC^n$, we have $\theta e_{i} = x_ie_i$. Computing the minimal polynomial, $m_\theta(t) = \det(\delta_{ij}t - a_{ij}) = \prod_{i=1}^n(t -a_i)$.

Notice that $e_ie_j = \delta_{ij}e_i$. It follows that $\theta^i = \sum_{j = 0}^{n-1} x_j^ie_j$. Therefore $M(e_1, \ldots, e_n)$ is the Vandermonde matrix with $i$th row given by $\begin{bmatrix} 1 & x_i & x_i^2 & \cdots & x_i^{n - 1}\end{bmatrix}$.

\end{example}

\begin{definition}\label{def:local_index_form}
With notation as above, let $\localindex(e_1, \ldots, e_n) = \det(M(e_1, \ldots, e_n)) \in A[x_I]$. We call this element a \emph{local index form} for $S'$ over $S$. When the basis is clear from context, we may omit the basis elements from the notation.
\end{definition}

\begin{proposition} \label{prop:gen1_equations}
Suppose $S' \to S$ is finite free and $S$ is affine. With notation as above, $\Gen$ is the distinguished affine subscheme $D(\localindex(e_1, \ldots, e_n))$ inside $\WR \cong \Spec A[x_I]$.
\end{proposition}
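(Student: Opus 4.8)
The plan is to realize $\Gen$ as the maximal open subscheme of $\WR \cong \Spec A[x_I]$ over which the universal map $v$ of Diagram \eqref{eq:univ_factorization_mon} is a closed immersion, and then to show that this locus is exactly the non-vanishing locus of $\det M = \localindex$. First I would record that $\Gen \subseteq \WR$ is open by Proposition \ref{prop:genisrep}, and that by the factorization \eqref{eq:univ_factorization_mon} it is precisely the locus over which $v$ is a closed immersion (rather than the a priori harder-to-control $u$). Since $S' \times_S \WR = \Spec B[x_I]$ and $V(m(t)) = \Spec A[x_I,t]/(m(t))$ are both affine, $v$ is a closed immersion over the preimage of an open $U \subseteq \WR$ precisely when the ring map $v^\sharp$ is surjective there; as $v^\sharp$ is the map of free rank-$n$ $A[x_I]$-modules given by the matrix $M$ of \eqref{eq:MatrixOfCoeffs}, this surjectivity is the vanishing of the coherent cokernel $\operatorname{coker}(M)$. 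Thus the locus where $v$ is a closed immersion equals $\WR \smallsetminus \operatorname{Supp}(\operatorname{coker}(M))$.

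The heart of the argument is the identification of $\operatorname{Supp}(\operatorname{coker}(M))$. Here I would invoke Fitting ideals: from the square presentation $A[x_I]^n \xrightarrow{M} A[x_I]^n \to \operatorname{coker}(M) \to 0$, the zeroth Fitting ideal is generated by the unique maximal minor $\det M = \localindex$, and for a finitely presented module one has $\operatorname{Supp}(\operatorname{coker}(M)) = V(\operatorname{Fitt}_0(\operatorname{coker} M)) = V(\localindex)$. Therefore the open locus over which $v^\sharp$ is surjective is exactly the complement $D(\localindex)$, and since an open subscheme of $\WR$ is determined by its underlying open set, this yields $\Gen = D(\localindex)$ as open subschemes of $\WR$. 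A more hands-on variant replaces Fitting ideals by a pointwise check: at a prime $\gp$, Nakayama's lemma reduces surjectivity of the stalk of $v^\sharp$ to surjectivity of the fiber $M \otimes \kappa(\gp)$, an endomorphism of an $n$-dimensional $\kappa(\gp)$-vector space, which holds iff $\det(M) \not\equiv 0 \pmod{\gp}$, i.e. iff $\gp \in D(\localindex)$. One can equally phrase the conclusion in terms of $T$-points: a map $T \to \WR$ classifying $\theta_T$ factors through $\Gen$ iff the pullback $v_T$ is a closed immersion iff the pulled-back matrix has unit determinant, using that a surjective endomorphism of a finitely generated module is an isomorphism and that an isomorphism of free modules has unit determinant.

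The step I expect to be the main obstacle is not conceptual depth but the care needed in testing "$v$ is a closed immersion" over non-reduced or non-local bases, so that the equivalence with surjectivity of $v^\sharp$ holds \emph{integrally} over all of $\WR$ rather than merely fiber by fiber. This is exactly why I favor the cokernel/Fitting-ideal formulation: it computes the entire vanishing locus in one stroke and sidesteps any passage from fibers to stalks or any gluing, whereas the pointwise argument must additionally lean on Nakayama's lemma and the fact that surjectivity of a finite-module map can be checked on fibers. Once this equivalence is pinned down, the remaining identification $\operatorname{Supp}(\operatorname{coker}(M)) = V(\localindex)$ is the routine content already anticipated by the remark that $\det M$ is a unit exactly when $v$ is full rank.
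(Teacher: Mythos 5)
Your proof is correct, and your primary (Fitting-ideal) route is genuinely different from the paper's, while your ``hands-on variant'' is essentially the paper's actual proof. The paper argues: both $\Gen$ and $D(\localindex)$ are open subschemes of $\WR$ (the former by Proposition \ref{prop:genisrep}), so it suffices that they have the same points; at a point $j : y \to \WR$ it then runs the chain ``$j$ factors through $\Gen$ iff $j^*v$ is a closed immersion iff $v^\sharp \otimes_A k(y)$ is surjective iff $j^\sharp(M)$ is full rank iff $j^\sharp(\localindex) \neq 0$'' --- linear algebra over residue fields, with no stalks, Nakayama, or Fitting ideals, since for open subschemes equality of underlying point sets already suffices. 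Your main route instead computes the entire non-immersion locus integrally as $\operatorname{Supp}(\operatorname{coker} M) = V(\operatorname{Fitt}_0(\operatorname{coker} M)) = V(\localindex)$. The trade-off: the paper's check is shorter and more elementary, and your worry about non-reduced or non-local bases is actually moot in their setup, precisely because two opens agree as soon as their points do; your route yields strictly more, namely the ideal $(\localindex)$ rather than just its zero set, so it recovers in one stroke the scheme structure on the non-monogenerators $\NGen_{S'/S}$ of Definition \ref{defn:nonmonogens} (the index form ideal is exactly the zeroth Fitting ideal of $\operatorname{coker}(M)$, which also makes its basis-independence transparent). One small point you assert but should justify in a sentence: the identification of $\Gen$ with the maximal open over which $v$ is a closed immersion uses the functorial description in both directions --- any open $U$ with $v_U$ a closed immersion satisfies $U \subseteq \Gen$ since $U \to \WR$ is then a $U$-point of $\Gen$, and conversely $v_{\Gen}$ is itself a closed immersion by pulling back along $\Gen \hookrightarrow \WR$.
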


\begin{proof}
By Proposition \ref{prop:genisrep}, $\Gen$ is an open subscheme of $\WR$. Therefore it suffices to check that $D(\localindex(e_1, \ldots, e_n))$ and $\Gen$ have the same points. Let $j : y \to \WR$ be the inclusion of a point with residue field $k(y)$. Then
\begin{align*}
  \text{$j$ factors through $\Gen$}\iff &\text{$j^*u$ is a closed immersion, where $u$ is the univ. hom.} \\
  \iff &\text{$j^*v$ is a closed immersion, for $v$ as in \eqref{eq:univ_factorization_mon}} \\
  \iff &\text{$v^\sharp \otimes_A k(y)$ is surjective} \\
  \iff &\text{$j^\sharp(M(e_1, \ldots, e_n))$ is full rank} \\
  \iff &\text{$j^\sharp(\localindex(e_1, \ldots, e_n))$ is nonzero.}
\end{align*}
This establishes the claim.
\end{proof}

\begin{remark}
If $B$ is a free $A$-algebra with basis $\{e_1, \ldots, e_n\}$, it follows that
$B$ is monogenic over $A$ if and only if there is a solution $(x_1, \ldots, x_n) \in A^n$ to one of the equations
\[
  \localindex(e_1,\ldots, e_n)(x_1,\ldots, x_n) = a
\]
as $a$ varies over the units of $A$. 
These are the well-known \emph{index form equations}. In the case that $A$ is a number ring there are only finitely many units, so only finitely many equations need be considered. This perspective
gives the set of \emph{global} monogenerators the flavor of a closed subscheme of $\WR$ even though $\Gen_1$ is an open subscheme.
\end{remark}

\begin{corollary} \label{thm:mon_is_affine}
The map $\Gen_{1, S'/S} \to S$ classifying \emph{mono}generators is affine.
\end{corollary}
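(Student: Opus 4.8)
The plan is to reduce the relative statement to the absolute, globally-free case already settled in Proposition \ref{prop:gen1_equations}, using that affineness of a morphism is local on the target. Concretely, I would recall the standard fact that a morphism $f : Y \to S$ is affine if and only if $S$ admits an open cover $\{U_i\}$ with each restriction $f^{-1}(U_i) \to U_i$ affine \cite[Exercise II.5.17]{hartshorne}. Thus it suffices to produce an open affine cover of $S$ over which $\Gen_{1, S'/S}$ becomes affine.

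First I would choose the cover. Since $\pi : S' \to S$ is finite locally free, $\pi_*\OO_{S'}$ is a finite locally free $\OO_S$-module of rank $n$; covering $S$ by open affines and shrinking them to trivialize this bundle yields a cover $\{U_i = \Spec A_i\}$ on which $\pi_*\OO_{S'}$ is free of rank $n$. Writing $S'_{U_i} \coloneqq S' \times_S U_i$, the morphism $S'_{U_i} \to U_i$ is then finite \emph{free} and $U_i$ is affine, which are exactly the hypotheses of Proposition \ref{prop:gen1_equations}.

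Next I would invoke the base-change property of $\Gen$ established above (the cartesian case of functoriality in $S$) to obtain a canonical isomorphism
\[
  \Gen_{1, S'/S} \times_S U_i \;\cong\; \Gen_{1, S'_{U_i}/U_i}.
\]
By Proposition \ref{prop:gen1_equations}, the right-hand side is the distinguished open subscheme $D(\localindex(e_1, \ldots, e_n)) \subseteq \Spec A_i[x_I]$. A distinguished open of an affine scheme is affine, so $\Gen_{1, S'/S} \times_S U_i$ is an affine scheme and in particular its structure map to $U_i$ is affine. The local criterion for affineness then upgrades this to the global conclusion that $\Gen_{1, S'/S} \to S$ is affine.

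I expect no serious obstacle: essentially all the content sits in Proposition \ref{prop:gen1_equations}, and the only point requiring care is the compatibility of the two local reductions, namely checking that after restricting to $U_i$ the extension is genuinely finite \emph{free} (not merely locally free), so that the proposition applies verbatim, and that the base-change isomorphism is the one identifying $\Gen_{1, S'/S} \times_S U_i$ with $\Gen_{1, S'_{U_i}/U_i}$. Once the cover is chosen to trivialize $\pi_*\OO_{S'}$, both points are immediate, and the local-on-the-base nature of affineness packages the charts into the global statement.
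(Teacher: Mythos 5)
Your proof is correct and is essentially the paper's own argument: both reduce to Proposition \ref{prop:gen1_equations} over an affine cover of $S$ trivializing $\pi_*\OO_{S'}$, where $\Gen_{1,S'/S}$ becomes a distinguished open subset of the affine scheme $\Spec A_i[x_I]$, and then conclude since affineness of a morphism is local on the target. You merely make explicit the base-change identification $\Gen_{1,S'/S} \times_S U_i \cong \Gen_{1, S'_{U_i}/U_i}$ and the local criterion for affineness, which the paper's one-line proof leaves implicit.
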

\begin{proof}
Proposition \ref{prop:gen1_equations} shows that $S$ possesses an affine cover on which $\Gen_{1, S'/S}$ restricts to a single distinguished affine subset of the affine scheme $\WR = \Spec A[x_I]$.
\end{proof}

\begin{remark}
Consider a local index form $\localindex(e_1, \ldots, e_n) = \det(M(e_1, \ldots, e_n))$ defined by a basis $B \simeq \bigoplus_{i=0}^{n-1} A \cdot e_i$ as above. Suppose $\tilde{e}_1, \ldots, \tilde{e}_n$ is a second basis of $B$ over $A$ and $\tilde{M}$ is the matrix representation of $v^\sharp$ with respect to the bases $\{ 1, \ldots, t^{n-1} \}$
and $\{ \tilde{e}_1, \ldots, \tilde{e}_n \}$. Then $\det(\tilde{M}) = u\det(M)$ for some unit $u$ of $A$. Although the determinant of $M$ may not glue to a global datum on $S$, this shows the ideal that it generates does.
\end{remark}

\begin{remark}\label{rmk:nonmonhomogeneous}
The local index forms $\localindex(e_1, \ldots, e_n)$ are homogeneous with respect to the grading on $A[x_I]$.
To see this, note that since the $i$th column of $M(e_1, \ldots, e_n)$ represents $\theta^{i-1}$, its entries are of degree $i-1$ in $x_1, \ldots, x_n$. The
Leibnitz formula for the determinant 
\[\det(a_{ij}) = \sum_{\sigma \in \Sigma_n} (-1)^\sigma \cdot \prod a_{\sigma(i) i}\]
shows that $\localindex(e_1, \ldots, e_n)$ is homogeneous of degree $\sum_{i = 1}^n (i-1) = \frac{i(i-1)}{2}.$
The transition functions induced by change of basis respect this grading, so the index form ideal $\indexsheaf_{S'/S}$ is a sheaf of homogeneous ideals. 
\end{remark}

\begin{remark}\label{rmk:IndexFormEquations}
We compare our local index form to the classical number theoretic situation. For references see \cite{EvertseGyoryBook} and \cite{GaalsBook}. 
If $K$ is a number field and $L$ is an extension of finite degree $n$, then there are $n$ distinct embeddings of $L$ into an algebraic closure that fix $K$. Denote them $\sigma_1,\dots, \sigma_n$. Let $\Tr$ denote the trace from $L$ to $K$. The \textit{discriminant} of $L$ over $K$ is defined to be the ideal 
$\disc(L/K)$ generated by the set of elements of the form
\[
( \det[\sigma_i(\omega_j)]_{1\leq i,j\leq n})^2=\det[\Tr(\omega_i\omega_j)]_{1\leq i,j\leq n},\]
where we vary over all $K$-bases for $L$, $\{\omega_1,\dots, \omega_n \}$, with each $\omega_i\in \ZZ_K$.
If $\alpha$ is any element of $L$, then the \textit{discriminant} of $\alpha$ over $K$ is defined to be
\begin{align*}
    \disc_{L/K}(\alpha) &=\left( \det\left(\sigma_i\left(\alpha^{j-1}\right)\right)_{1\leq i,j\leq n}\right)^2     \\
    &=\det\left(\Tr\left(\alpha^{i-1}\alpha^{j-1}\right)\right)_{1\leq i,j\leq n}     \\
    &=\prod_{1\leq i<j\leq n}\left(\sigma_i(\alpha)-\sigma_j(\alpha)\right)^2,
\end{align*}
where the last equality comes from Vandermonde's identity. Note that $\disc_{L/K}(\alpha)$ is a power of the discriminant of the minimal polynomial of $\alpha$. 
For every $\alpha$ generating $L$ over $K$ one has
\[\disc_{L/K}(\alpha)=\left[\ZZ_L:\ZZ_K[\alpha]\right]^2\disc(L/K).\]
One defines the \textit{index form} of $\ZZ_L$ over $\ZZ_K$ be to
\[\classicalindex_{\ZZ_L/\ZZ_K}(\alpha) = \left[\ZZ_L:\ZZ_K[\alpha]\right]=\sqrt{\left|\frac{\disc_{L/K}(\alpha)}{\disc(L/K)}\right|}.\]
Confer \cite[Equation 5.2.2]{EvertseGyoryBook}.
In the case where $\{\omega_1,\dots, \omega_n \}$ is a $\ZZ_K$-basis for $\ZZ_L$, employing some linear algebra \cite[Equation (1.5.3)]{EvertseGyoryBook}, one finds $\classicalindex_{\ZZ_L/\ZZ_K}$ is, up to an element of $\ZZ_K^*$, the determinant of the change of basis matrix from $\{\omega_1,\dots, \omega_n \}$ to $\{1, \alpha,\dots, \alpha^{n-1} \}$.

The matrix in Equation \eqref{eq:MatrixOfCoeffs} is just such a matrix and its determinant coincides up to a unit with the index form in situations where the index form is typically defined. 
The generality of our setup affords us some flexibility that is not immediate from the definition of the classical index form equation.
\end{remark}

The local index forms give the complement of $\Gen_{S'/S}$ in $\WR_{S'/S}$ a closed subscheme structure.

\begin{definition}[Non-monogenerators $\NGen_{S'/S}$]\label{defn:nonmonogens}
Let $\indexsheaf_{S'/S}$ be the locally principal ideal sheaf on $\WR$ generated locally by local index forms. We call this the \emph{index form ideal}. Let $\NGen_{S'/S}$ be the closed subscheme of $\WR$ cut out by the vanishing of $\indexsheaf_{S'/S}$. We call this the \emph{scheme of non-monogenerators}, since its support is the complement of $\Gen_{S'/S}$ inside of $\WR$.
\end{definition}

\begin{proposition}
When the scheme of non-monogenerators $\NGen_{S'/S}$ is an effective Cartier divisor (equivalently, when none of the local index forms are zero divisors), the divisor class of $\NGen_{S'/S}$ in $\WR = \HHom_S(S', \AA^1)$ is the same as the pullback of the Steinitz class of $S'/S$ from $S$.
\end{proposition}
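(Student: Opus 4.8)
The plan is to exhibit the local index form, globally, as the determinant of the universal comparison map $v^\sharp$ from the factorization \eqref{eq:univ_factorization_mon}, and then to recognize the determinant line bundle it naturally lives in as the pullback of the Steinitz class. Write $p \colon \WR \to S$ for the structure map. Recall from the remark following \eqref{eq:univ_factorization_mon} that $v$ induces an $\OO_\WR$-algebra homomorphism
\[
  v^\sharp \colon \tau_*\OO_{V(m(t))} \longrightarrow \pi_*\OO_{S'\times_S\WR}
\]
of finite locally free $\OO_\WR$-modules, each of rank $n$, with $v^\sharp(t) = \theta$. The key move is to pass to top exterior powers: $\bigwedge\nolimits^n v^\sharp$ is a globally defined map $\bigwedge\nolimits^n \tau_*\OO_{V(m(t))} \to \bigwedge\nolimits^n \pi_*\OO_{S'\times_S\WR}$, equivalently a global section of the line bundle $\det \pi_*\OO_{S'\times_S\WR} \otimes (\det \tau_*\OO_{V(m(t))})^\vee$.

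Next I would identify the two determinant bundles. The source is globally free, $\tau_*\OO_{V(m(t))} = \bigoplus_{i=0}^{n-1}\OO_\WR\cdot t^i$, so $\det \tau_*\OO_{V(m(t))} \cong \OO_\WR$ canonically, with generator $1\wedge t\wedge\cdots\wedge t^{n-1}$. For the target, the formation of $\pi_\ast$ for a finite locally free morphism commutes with arbitrary base change, so $\pi_*\OO_{S'\times_S\WR} \cong p^*\pi_*\OO_{S'}$, whence $\det \pi_*\OO_{S'\times_S\WR} \cong p^*\det\pi_*\OO_{S'} = p^*\bigwedge\nolimits^n\pi_*\OO_{S'}$, the pullback of the Steinitz line bundle. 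Under these identifications $\bigwedge\nolimits^n v^\sharp$ becomes a global section $s$ of $p^*\det\pi_*\OO_{S'}$.

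I would then check locally that $s$ is the index form. Over an affine open of $S$ on which $\pi_*\OO_{S'}$ is free with basis $e_1,\ldots,e_n$, the generator $1\wedge t\wedge\cdots\wedge t^{n-1}$ maps under $\bigwedge\nolimits^n v^\sharp$ to
\[
  1\wedge\theta\wedge\cdots\wedge\theta^{n-1} = \localindex(e_1,\ldots,e_n)\cdot(e_1\wedge\cdots\wedge e_n),
\]
since the coordinate column of $\theta^{j-1}$ is the $j$th column of $M(e_1,\ldots,e_n)$. Thus $s$ is locally the local index form times the trivialization $e_1\wedge\cdots\wedge e_n$ of $p^*\det\pi_*\OO_{S'}$; its zero scheme is exactly $\NGen$ and $\indexsheaf_{S'/S}$ is the ideal it generates. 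Under the hypothesis that the local index forms are nonzerodivisors, $s$ is a regular section, so $\NGen$ is an effective Cartier divisor and the canonical isomorphism $\OO_\WR(\NGen)\cong p^*\det\pi_*\OO_{S'}$ for the zero locus of a regular section yields the claimed equality of classes in $\Pic(\WR)$.

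The main obstacle is bookkeeping rather than conceptual: one must make the two determinant identifications simultaneously and compatibly, so that the resulting section is literally the index form and not some twist of it. This requires the base-change isomorphism $\pi_*\OO_{S'\times_S\WR}\cong p^*\pi_*\OO_{S'}$ together with the naturality of $\bigwedge\nolimits^n$. A \v{C}ech-style alternative — reading off how local index forms transform, via $\localindex_j = \det(P_{ij})\,\localindex_i$ for a change-of-basis matrix $P_{ij}$, and matching the resulting transition data to that of $\det\pi_*\OO_{S'}$ — reaches the same conclusion but is noticeably more sign-sensitive. The determinant-of-a-map formulation is cleaner precisely because the regular-section/divisor correspondence fixes the sign automatically.
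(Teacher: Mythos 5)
Your proposal is correct and is essentially the paper's own argument: both take the top exterior power of $v^\sharp : \tau_*\OO_{V(m(t))} \to \pi_*\OO_{S'\times_S\WR}$, trivialize the source determinant by the basis $1, t, \ldots, t^{n-1}$, identify the target determinant with the pullback of the Steinitz bundle via base-change compatibility of $\pi_*$ and $\det$, and recognize the resulting global section locally as the local index form, whose vanishing locus is $\NGen_{S'/S}$ by definition. The only difference is the order of the identifications (the paper identifies $\det \pi_*\OO_{S'\times_S\WR}$ with $\OO(\NGen_{S'/S})$ first and then applies base change, while you do the reverse), which is immaterial.
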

\begin{proof}
Recall that $V(m(t))$ is the vanishing of $m(t)$ in $\AA^1_{\WR}$, where $m(t)$ is the generic minimal polynomial for $S'/S$. Let $\tau$ be the natural map $\tau : V(m(t)) \to \WR$. Consider the morphism
\[
  v^\sharp : \tau_*\OO_{V(m(t))} \to \pi_*\OO_{S' \times_S \WR}
\]
of sheaves on $\WR$. The first sheaf is free of rank $n$ since $m(t)$ is a monic polynomial: there is a basis given by the images of $1, t, \ldots, t^{n-1}$. Therefore, taking $n$th wedge products in the previous equation, we have a map
\[
  \OO_{\WR}\cong \det(\tau_*\mathcal{O}_{V(m(t))}) \to \det\left(\pi_*\OO_{S' \times_S \WR}\right). 
\]
By construction, this map is locally given by a local index form, $\localindex(e_1, \ldots, e_n)$. Since we have assumed that $\NGen_{S'/S}$ is an effective Cartier divisor, the determinant is locally a nonzero-divisor. Therefore, the determinant identifies a non-zero section of $\pi_*\OO_{S' \times_S \WR}$. By definition of
$\NGen_{S'/S}$, we may identify $\det(\pi_*\OO_{S' \times_S \WR})$ with $\OO(\NGen_{S'/S})$.

Writing $\psi : \WR \to S$ for the structure map, we also have that
\[
  \det(\pi_*\OO_{S' \times_S \WR}) \cong \psi^*\det(\pi_*\mathscr{O}_{S'}).
\]
since taking a determinental line bundle commutes with arbitrary base change and $\pi_*$ commutes with base change for flat maps. The class of the line bundle $\det(\pi_*\mathscr{O}_{S'})$ in $\Pic(S)$ is by definition the Steinitz class.
\end{proof}

\subsection{Explicit equations for polygenerators $\Gen_k$} \label{ssec:genk_equations}
The work above readily generalizes to describe $\Gen_k$. 

Fix a number $k \in \NN$. We now construct explicit equations for $\Gen_k$ as a subscheme of $\WR_k = \HHom_S(S', \mathbb{A}^k)$ when $S' \to S$ is free and $S$ is affine. These hypotheses hold Zariski locally on $S$, so by Remark \ref{rmk:pbmonnormnofmon}, this gives a construction for $\Gen_k$ locally on $S$ in the general case.

Let $S' = \Spec B$ and $S = \Spec A$, where $B = \bigoplus_{i=1}^n A \cdot e_i$ is a finite free $A$-algebra of rank $n$ with basis $e_1, \ldots, e_n$. Let $J = \{ 1, \ldots, k \}$ and $I = \{ 1, \ldots, n \}$. Write $t_J$ for the $|J|$ coordinate functions $t_1, \ldots, t_k$ of $\mathbb{A}^k$ and write $x_{I \times J}$ as shorthand for $|I \times J|$ variables $x_{ij}$ indexed by $(i,j) \in I \times J$.

The scheme $\WR_k$ is represented by the affine scheme $\Spec A[x_{I \times J}]$ and the universal map for $\WR_k$ is the commutative triangle
\[\begin{tikzcd}
  S'\times_S \WR_k \ar[rr, "u"] \ar[dr]        &       & \AA^k_{\WR_k} \ar[dl]       \\
        &\WR_k
\end{tikzcd}\]
where the horizontal arrow $u$ is induced by the ring map $A[x_{I \times J}, t_J] \to B[x_{I \times J}]$ sending
\[
  t_j \mapsto \theta_j \coloneqq   \sum_{i \in I} x_{ij} e_i.
\]
Notice that $S' \times_S \WR_k \to \WR_k$ is in Situation \ref{sit:gensetup}. Apply Lemma \ref{lem:minpoly} to find monic degree $n$ polynomials $m_j(t_j) \in A[x_{I \times J}, t_J]$ such that
$m_j(\theta_j) = 0 $ in $B[x_{I \times J}]$.
Write $v^\sharp$ for the
unique map
\[
  v^\sharp : A[x_{I \times J}, t_J]/(m_j(t_j) : j \in J) \longrightarrow B[x_{I \times J}]
\]
factoring $u^\sharp : A[x_{I \times J}, t_J] \to B[x_{I \times J}]$.

Now, $A[x_{I \times J}, t_J]/(m_j(t_j) : j \in J)$ is a free $A[x_{I \times J}]$-module of rank $n^k$ with basis given by the equivalence classes of the products $t_1^{r_1}\cdots t_k^{r_k}$ as the powers $r_j$ vary between $0$ and $n - 1$. Since $B[x_{I \times J}]$
is also a free $A[x_{I \times J}]$-module with basis $e_1, \ldots, e_n$, we may choose an ordering of the powers $t_1^{r_1} \cdots t_k^{r_k}$ and represent the map $v^\sharp$ by an $n^k \times n$ matrix $M$.
For each subset $C \subseteq \{ 1, \ldots, kn \}$ of size $n$, let $M_C$ be the submatrix of $M$ whose columns are indexed by $C$ and let $\det(M_C)$ be the determinant.

\begin{proposition} \label{prop:genk_equations}
Suppose $S' \to S$ is finite free and $S$ is affine. Then with notation as above, $\Gen_k$ is the union of the distinguished affines $D(\det(M_C))$ inside $Y$.
\end{proposition}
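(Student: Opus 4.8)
The plan is to imitate the proof of Proposition \ref{prop:gen1_equations} almost verbatim; the only genuinely new ingredient is that the universal matrix $M$ is now rectangular rather than square, so that surjectivity of $v^\sharp$ is detected by the maximal minors $\det(M_C)$ collectively rather than by a single determinant. First I would record that, by Proposition \ref{prop:genisrep}, the subfunctor $\Gen_k \subseteq \WR_k$ is represented by an open subscheme, while $\bigcup_C D(\det(M_C))$ is open in $\WR_k = \Spec A[x_{I \times J}]$ as a union of distinguished opens. Since two open subschemes of $\WR_k$ with the same underlying set coincide (an open subscheme carries the restricted structure sheaf), it suffices to show that $\Gen_k$ and $\bigcup_C D(\det(M_C))$ have the same points. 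I would therefore fix a point $j : y \to \WR_k$ with residue field $k(y)$ and run a chain of equivalences.

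The chain proceeds as follows: $j$ factors through $\Gen_k$ iff the pullback $j^* u$ of the universal map is a closed immersion, by definition of $\Gen_k$. Because each $\theta_j$ satisfies $m_j(\theta_j) = 0$ by Lemma \ref{lem:minpoly}, the map $u$ factors through the closed subscheme $V(m_j(t_j) : j \in J) \hookrightarrow \Aff^k_{\WR_k}$ via $v$; as this inclusion is a closed immersion (in particular a monomorphism), $j^* u$ is a closed immersion iff $j^* v$ is, and this factorization and cancellation survive pullback along $j$. Since everything is affine over $\WR_k$ and $v$ corresponds to $v^\sharp$ between free modules of ranks $n^k$ and $n$, the morphism $j^* v$ is a closed immersion iff $v^\sharp \otimes_{A[x_{I \times J}]} k(y)$ is surjective.

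The one new step is the final equivalence: a $k(y)$-linear map $k(y)^{n^k} \to k(y)^n$ with matrix $j^\sharp(M)$ is surjective iff $j^\sharp(M)$ has full rank $n$, i.e.\ iff some maximal ($n \times n$) minor $\det(M_C)$ does not vanish at $y$. Equivalently, $y \in \bigcup_C D(\det(M_C))$. Concatenating all the equivalences gives $\Gen_k = \bigcup_C D(\det(M_C))$ as point sets, hence as open subschemes, which is the claim.

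I expect the only point requiring care — the main obstacle — to be this last equivalence together with its bookkeeping. In contrast to the case $k = 1$ (Proposition \ref{prop:gen1_equations}), where surjectivity is captured by the single index form $\localindex = \det(M)$, here the source has rank $n^k$, which exceeds $n$ as soon as $k \geq 2$ and $n \geq 2$; so no single minor cuts out $\Gen_k$ and one genuinely needs the \emph{union} of the distinguished opens $D(\det(M_C))$. I would also make sure the reduction step is airtight: that $V(m_j(t_j) : j \in J)$ is the correct ambient closed subscheme (it is finite free of rank $n^k$ over $\WR_k$, as recorded just before the statement), and that testing surjectivity of $v^\sharp$ on the fiber over $y$ is legitimate because the source and target of $v^\sharp$ are free of constant rank, so forming the matrix $M$ commutes with base change along $j$.
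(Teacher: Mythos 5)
Your proposal is correct and matches the paper's argument: the paper's proof is literally the one-line instruction ``Check on points as in Proposition \ref{prop:gen1_equations},'' and your write-up is exactly that check carried out, with the only new ingredient (full rank of the rectangular matrix $j^\sharp(M)$ detected by the nonvanishing of some maximal minor $\det(M_C)$, hence a union of distinguished opens rather than a single one) handled correctly.
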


\begin{proof}
Check on points as in Proposition \ref{prop:gen1_equations}. 
\end{proof}


\section{Examples of the scheme of monogenerators}\label{s:examples}

We conclude with several examples to illustrate the nature and variety of the scheme of monogenerators. We will consider situations in which the classical index form of Remark \ref{rmk:IndexFormEquations} is well-studied, such as field extensions and extensions of number rings, as well as the more exotic situation of jet spaces.
We will make frequent reference to computation of the index form using the techniques of Section \ref{ssec:gen1_equations}.

\subsection{First examples}\label{subsec: firstexamples}

\begin{example}[Quadratic Number Fields]\label{ex:general_quadratic}
Let $K = \QQ(\sqrt{d})$, for any square-free integer $d$. It is well-known that the ring of integers $\ZZ_K$ is monogenic: $\ZZ_L\cong \ZZ[\sqrt{d}]$ or $\ZZ[\frac{1 + \sqrt{d}}{2}]$, depending on $d \bmod 4$. We will confirm this using our framework, and determine the scheme $\Gen_1$ of monogenic generators.

Let $\alpha$ denote the known generator of $\mathcal{O}_L$, either $\sqrt{d}$ or $\frac{1 +\sqrt{d}}{2}$. Let us take $\{1, \alpha \}$ as the basis $e_1, \ldots, e_n$. The universal map diagram \eqref{eq:univ_factorization_mon} becomes:
\[
\begin{tikzcd}
\ZZ[a, b, \alpha]        &   \ZZ[a, b, t]/(m(t))  \ar[l]  &\ZZ[a, b, t]   \ar[l] \\
        &\ZZ[a, b] \ar[ul] \ar[ur] \ar[u]
\end{tikzcd}
\]
where the map $\ZZ[a,b,t]/(m(t)) \to \ZZ[a, b, \alpha]$ is given by $t \mapsto a + b\alpha$. The universal minimal polynomial $m(t)$ is given by $t^2 - \text{Tr}(a + b\alpha)t + \text{N}(a + b\alpha)$.

This diagram encapsulates all choices of generators as follows. The elements of $\ZZ[\alpha]$ are all of the form $a_0 + b_0\alpha$ for $a_0, b_0 \in \ZZ$. Integers $a_0, b_0 \in \ZZ$ are in bijection with maps $\phi : \ZZ[a,b] \to \ZZ$. Applying the functor $\ZZ \otimes_{\phi,\ZZ[a,b]} - $ to the diagram above yields a diagram
\[
\begin{tikzcd}
\ZZ[\alpha]        &   \ZZ[t]/(m(t))  \ar[l]  &\ZZ[t]   \ar[l] \\
        &\ZZ \ar[ul] \ar[ur] \ar[u]
\end{tikzcd}
\]
where the map $\ZZ[t]/(m(t)) \to \ZZ[\alpha]$ takes $t \mapsto a_0 + b_0\alpha$. The image is precisely $\ZZ[a_0 + b_0\alpha]$, and the index form that we are about to compute detects whether this is all of $\ZZ[\alpha]$.

 Returning to the universal situation, the matrix representation of the map $\mathbb{Z}[a,b,t]/(m(t))\to \mathbb{Z}[a,b,\alpha]$ (what we have been calling the matrix of coefficients \eqref{eq:MatrixOfCoeffs}) is given by
\[\begin{bmatrix}1 & a \\ 0 & b\end{bmatrix}.\]
Notice that we did not need to compute $m(t)$ to get this matrix. The determinant, $b$, is the local index form associated to the basis $\{ 1, \alpha \}$. Therefore $\Gen_1 \cong \ZZ[a, b, b^{-1}]$. Taking $\ZZ$-points of $\Gen_1$, we learn that $a + b\alpha$ ($a, b \in \ZZ$) is a monogenic generator precisely when $b$ is a unit, i.e. $b = \pm 1$.
\end{example}

Proposition \ref{prop:deg2samemonospace} generalizes this example to any degree-two $S' \to S$.

\begin{example} \label{ex:confspindexform}
Resuming the situation of Example \ref{ex:etale}, the local index form with respect to the basis $e_1,\ldots,e_n$ is the Vandermonde determinant:
\[
  \localindex(e_1,\ldots,e_n)(x_1,\ldots,x_n) = \pm \prod_{i < j} (x_i - x_j).
\]
Therefore $\Gen_{S'/S} = \Spec \CC[x_1,\ldots,x_n, (\prod_{i < j} (x_i - x_j))^{-1}]$. The claim of Example \ref{ex:confspsintro} follows.
\end{example}


\begin{example}[Jets in $\Aff^1$]
\label{ex:jets_in_A1}

Let $S = \Spec \ZZ$ and $S'_n = \Spec \ZZ[\e]/\e^n$, as in Example~\ref{ex:jetsps}. We explicitly describe $\Gen_{1, S'_n/S} \subseteq \WR = \jetsp{n-1, \AA^1}$.

Choose the basis $1, \e, \ldots, \e^{n-1}$ for $\ZZ[\e]/\e^n$.
With respect to this basis, we may write the universal map diagram as
\[
\begin{tikzcd}
\ZZ[x_1, \ldots, x_n, \e]/\e^n        &   \ZZ[x_1,\ldots,x_n,t]/(m(t))  \ar[l]  &\ZZ[x_1, \ldots, x_n, t]   \ar[l] \\
        &\ZZ[x_1,\ldots,x_n] \ar[ul] \ar[ur] \ar[u]
\end{tikzcd}
\]
where $t \mapsto x_1 + x_2\e + \cdots + x_n\e^{n-1}$.

Change coordinates by $t \mapsto t - x_1$ 
so that the image of $t$ is
\[
  t \mapsto \theta = x_2\e + \cdots + x_n\e^{n-1}.
\]
Update $m(t)$ accordingly: $m(t) = t^n$. Our next task is to compute the representation of $\theta^j$ in $\{1,\e, \ldots, \e^{n-1}\}$-coordinates for $j = 0, \ldots, n-1$. The multinomial theorem yields
\[
  (x_2 \e + x_3 \e^2 + \cdots + x_n \e^{n-1})^j = \sum_{i_2 + i_3 + \cdots + i_n = j} \dbinom{j}{i_2, \dots, i_n} \prod_{t = 2}^n x_t^{i_t} \e^{(t-1)i_t}.
\]

The coefficient of $\e^p$ is 
\[
  \sum_{\substack{i_2 + i_3 + \cdots + i_n = j \\
  i_2 + 2i_3 + \cdots (n-1) i_n = p}} \dbinom{j}{i_2, \dots, i_n} \prod_{t = 2}^n x_t^{i_t}.
\]

The matrix of coefficients in Figure \ref{fig:mjetmatrix} represents the $\ZZ[x_1,\ldots,x_n]$-linear map from $\ZZ[x_1,\ldots,x_n, t]/(m(t))$ to $\ZZ[x_1,\ldots,x_n,\e]/\e^{n} = \bigoplus_{i=0}^{n-1} \ZZ[x_1,\ldots,x_n] \cdot \e^i$. The coefficient of $\e^p$ above appears in the $(j+1)$st 
column and $(p+1)$st row. 

\begin{figure}[h]
\centering
\[M_n=\left[\begin{tikzcd}[column sep=tiny, row sep=tiny]
1       &0      &0      &0      &0      &\cdots     &0      \\
0       &x_2        &0       &0      &0      &\cdots        &0        \\
0       &x_3        &x_2^2       &0      &0      &\cdots         &0       \\
0       &x_4        &2x_2 x_3       &x_2^3      &0      &\cdots         &0       \\
0       &x_5        &2x_2 x_4 + x_3^2       &3x_2^2 x_3      &x_2^4      &\cdots         &0       \\
\vdots  &  \vdots   & \vdots                & \vdots         & \vdots    & \ddots        & \vdots   \\
0       &x_n        & \cdots                & \cdots         & \cdots    &\cdots         &x_2^{n-1}       
\end{tikzcd}\right]\]
\caption{The matrix determined by an $(n-1)$-jet.}\label{fig:mjetmatrix}
\end{figure}
Since $M_n$ is lower triangular, it has determinant $x_2^{\frac{n(n-1)}{2}}$. An $(n-1)$-jet thereby belongs to $\Gen_{\Aff^1, S'_n/S}$ if and only if the coefficient $x_2$ is a unit. The $x_i$ are naturally coordinates of the jet space, yielding $\Gen_{\Aff^1, S'_n/S} = \Spec \ZZ[x_1,\ldots,x_n,x_2^{-1}] \subseteq \WR_{S'_n/S} = J_{n-1, \Aff^1}$.

\end{example}

The scheme of $k$-generators $\Gen_k$ need not be affine. Even
for the Gaussian integers $\ZZ[i] / \ZZ$, we have that $\Gen_k = \Aff^k \times (\Aff^k \setminus \{ \vec{0} \})$. We prove this in Proposition \ref{prop:deg2samemonospace}, after a small lemma. 
The second factor begs to be quotiented by group actions of $\GG_m$, $\Sigma_n$, or $\GL_n$: doing so leads to the notion of twisted monogenicity considered in the sequel paper.

\begin{lemma}\label{lem:oneispartofbasis}
Locally on $S$, the ring $\OO_{S'}$ has an $\OO_S$-basis in which one basis element is $1$.
\end{lemma}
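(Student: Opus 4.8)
The plan is to work Zariski-locally on $S$, where Situation \ref{sit:gensetup} lets me assume $S = \Spec A$ with $A$ noetherian and $B = \Gamma(S', \OO_{S'})$ a free $A$-algebra of rank $n \geq 1$. The structure element $1 \in B$ is the image of $1 \in A$, and I claim it is a \emph{unimodular} vector of the free module $B$, i.e.\ that it spans a free rank-one direct summand. Granting this, the quotient $Q \coloneqq B/A\cdot 1$ is a finitely generated projective $A$-module of rank $n-1$; over the noetherian ring $A$ it is therefore locally free, so after shrinking $S$ to a smaller affine open it becomes free. Lifting a basis of $Q$ to elements $e_2, \ldots, e_n \in B$ and adjoining $1$ then yields the desired $A$-basis $\{1, e_2, \ldots, e_n\}$ of $B$, since the short exact sequence $0 \to A \xrightarrow{\cdot 1} B \to Q \to 0$ splits (the sub is free of rank one and the quotient is free).

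The heart of the argument is the unimodularity of $1$, which I would verify fiberwise. A vector in a free module $A^n$ is unimodular precisely when its image in the fiber $A^n \otimes_A \kappa(\gm)$ is nonzero for every maximal ideal $\gm$ of $A$, where $\kappa(\gm) = A/\gm$. For $v = 1$, its image in $B \otimes_A \kappa(\gm) = B/\gm B$ is the identity element of this $\kappa(\gm)$-algebra. Because $B$ has constant rank $n \geq 1$, the fiber $B/\gm B$ is a nonzero $\kappa(\gm)$-algebra (of dimension $n$), so its identity element is nonzero. Hence $1 \notin \gm B$ for every maximal ideal $\gm$, which is exactly the unimodularity of $1$ in $B$.

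There is little genuine obstacle here: the only thing to watch is the gap between ``projective'' and ``free,'' which is bridged by the noetherian hypothesis together with the freedom to refine the open cover, exactly as permitted by the phrase ``locally on $S$.'' Alternatively, I could bypass unimodularity and argue at the stalk directly: over the local ring $A_\gp$, the reduction $B/\gp B$ is a nonzero $\kappa(\gp)$-vector space in which $\bar 1 \neq 0$, so I extend $\bar 1$ to a $\kappa(\gp)$-basis, lift it to $B_\gp$, and invoke Nakayama to conclude it is an $A_\gp$-basis beginning with $1$; clearing denominators then spreads this basis out over a distinguished affine neighborhood of $\gp$. Either route reduces the lemma to the single observation that the identity element survives in every fiber, which holds because the rank is positive.
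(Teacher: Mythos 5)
The paper offers no argument to compare against: its proof of this lemma is literally ``Omitted.'' Your proposal is correct and fills that gap with a standard argument. The key step---that $1 \in B$ is unimodular because its image in each fiber $B \otimes_A \kappa(\gm)$ is the identity of a nonzero $\kappa(\gm)$-algebra (nonzero since the rank is $n \geq 1$)---is exactly right, and the passage from unimodularity to a splitting $B \cong A \cdot 1 \oplus Q$ with $Q$ finitely generated projective, followed by shrinking $S$ so that $Q$ is free, is sound. One cosmetic point: when you ``lift a basis of $Q$,'' the cleanest phrasing is to lift via the chosen splitting $s : Q \to B$, so that $\{1, s(\bar e_2), \ldots, s(\bar e_n)\}$ is visibly a basis; arbitrary lifts also work, but then one should remark that the change-of-basis matrix is unipotent, hence invertible. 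Your alternative stalkwise route (extend $\bar 1$ to a basis of the fiber, lift, apply Nakayama plus the fact that $n$ generators of a free rank-$n$ module form a basis, then spread out over a distinguished open) is equally valid and is arguably the leaner of the two, since it avoids invoking the projective-implies-locally-free step.
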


\begin{proof}
Omitted. 
\end{proof}

\begin{proposition}\label{prop:deg2samemonospace}
Suppose $S' \to S$ has degree 2 and let $\vct{0} \in \Aff^k_S$ be the zero section. Then affine locally on $S$ we have an isomorphism
\[
  \Gen_{k, S'/S} \cong \AA^k_S \times (\mathbb{A}^k_{S} \setminus \vct{0}).
\]
\end{proposition}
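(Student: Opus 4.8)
The plan is to reduce to the free rank-two case and then read off $\Gen_k$ from its functor of points. By Lemma~\ref{lem:oneispartofbasis} together with the fact that $\Gen_k$ may be computed Zariski-locally on $S$ (Remark~\ref{rmk:pbmonnormnofmon}), I would work affine-locally and assume $S = \Spec A$ and $S' = \Spec B$ with $B = A\cdot 1 \oplus A\cdot \alpha$ a free $A$-algebra of rank $2$; here ``degree $2$'' means exactly rank $2$. Write $\alpha^2 = c + d\alpha$ with $c,d \in A$ for the relation coming from the minimal polynomial. With the basis $\{1,\alpha\}$, the scheme $\WR_k$ is $\Spec A[x_{I\times J}]$ with $I=\{1,2\}$ and $J=\{1,\dots,k\}$, and the universal tuple is $\theta_j = x_{1j} + x_{2j}\alpha$.

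The key step is a direct computation of the subalgebra generated by a tuple. Given $T = \Spec R$ over $S$ and $\theta_j = r_{1j} + r_{2j}\alpha \in B\otimes_A R$, I claim
\[
  R[\theta_1,\dots,\theta_k] = R \oplus \ga\,\alpha, \qquad \ga \coloneqq (r_{21},\dots,r_{2k}) \subseteq R.
\]
Indeed, the right-hand side contains $1$ and each $\theta_j$, and conversely any subalgebra containing $1$ and the $\theta_j$ must contain $\theta_j - r_{1j} = r_{2j}\alpha$, hence all of $\ga\alpha$. The content of the claim is that $R\oplus \ga\alpha$ is already closed under multiplication: using $\alpha^2 = c + d\alpha$, a product $(r_{2i}\alpha)(r_{2j}\alpha) = r_{2i}r_{2j}c + (r_{2i}r_{2j}d)\alpha$ lies in $R\oplus\ga\alpha$ because $r_{2i}r_{2j}d \in \ga$. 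Consequently $R[\theta_1,\dots,\theta_k] = B\otimes_A R = R\oplus R\alpha$ if and only if $\ga = R$, i.e. the $\alpha$-coordinates $r_{21},\dots,r_{2k}$ generate the unit ideal.

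It remains to recognize the two factors. The coordinates $r_{1j}$ are unconstrained, contributing a copy of $\AA^k_S$. The condition $\ga = R$ says precisely that $r_{21},\dots,r_{2k}$ have no common zero on $T$, which is exactly the condition for the corresponding map $T \to \AA^k_S$ to factor through the open complement $\AA^k_S\setminus \vct{0}$ of the zero section (this complement is $\bigcup_j D(x_j)$, and a map lands in an open subscheme precisely when its set-theoretic image does). Thus for every $S$-scheme $T$ there is a bijection $\Gen_{k,S'/S}(T) \cong \AA^k_S(T)\times (\AA^k_S\setminus \vct{0})(T)$, natural in $T$, and the isomorphism of schemes follows from the Yoneda lemma. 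Equivalently, one can invoke Proposition~\ref{prop:genk_equations}: the maximal minors of the matrix $M$ generate the ideal $(x_{21},\dots,x_{2k})$ — the minor on the columns for $1$ and $\theta_j$ being $x_{2j}$, while every other minor lies in $(x_{21},\dots,x_{2k})$ since its $\alpha$-row entries do — so $\Gen_k = \Spec A[x_{I\times J}]\setminus V(x_{21},\dots,x_{2k})$, which is visibly the asserted product.

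The main obstacle is the second paragraph: one must verify that algebra generation coincides with module generation by $\{1,\theta_1,\dots,\theta_k\}$, i.e. that no higher products $\theta_{i_1}\cdots\theta_{i_m}$ are needed. This is special to rank $2$ and amounts exactly to the assertion that $R\oplus\ga\alpha$ is closed under multiplication; once this is in hand, the remainder is a routine matching of functors of points. A minor secondary point is to phrase the ``no common zero'' condition correctly as factorization through the open subscheme $\AA^k_S\setminus\vct{0}$ rather than as a naive pointwise statement.
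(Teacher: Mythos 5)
Your proposal is correct; in fact it contains two proofs, and the second one is the paper's. The paper argues exactly as in your closing ``equivalently'' paragraph: in the setup of Proposition~\ref{prop:genk_equations} it sets $t_i \mapsto a_i + b_i e$ with $e^2 = c + de$, writes out the columns of the matrix of coefficients (that of $1$ is $(1,0)$, that of $t_i$ is $(a_i, b_i)$, that of $t_it_j$ is $(a_ia_j + b_ib_jc,\; a_ib_j + a_jb_i + b_ib_jd)$), observes that the $2\times 2$ minors on the columns of $1$ and $t_i$ are the $b_i$ while all remaining minors lie in $(b_1,\dots,b_k)$ because every entry of the second row does, and concludes that $\Gen_{k,S'/S} = \bigcup_i D(b_i)$. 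Your primary argument is a genuinely different, more self-contained route: the identity $R[\theta_1,\dots,\theta_k] = R \oplus \ga\,\alpha$ with $\ga = (r_{21},\dots,r_{2k})$ pins down the generated subalgebra for an \emph{arbitrary} tuple, reduces the proposition to the statement that $\ga = R$ exactly when the $\alpha$-coordinates have no common zero, and never invokes Proposition~\ref{prop:genk_equations} (whose own proof the paper leaves terse). The two arguments share the same kernel---in rank $2$ the relation $\alpha^2 = c + d\alpha$ forces the $\alpha$-coordinate of any product of the $\theta_j$ into the ideal generated by their $\alpha$-coordinates---so what your version buys is independence from the index-form machinery of Section~\ref{sec:indexformconstgen} plus a sharper intermediate statement, while the paper's version is shorter given the machinery already built there. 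One small point in your favor: for $k \geq 3$ the matrix also has columns for higher products $t_{i_1}\cdots t_{i_m}$, which the paper's enumeration of columns omits, but which your uniform phrasing (every minor other than those singled out lies in $(x_{21},\dots,x_{2k})$ since its $\alpha$-row entries do) covers without modification.
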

\begin{proof}
Working affine locally and applying Lemma \ref{lem:oneispartofbasis}, we may take $S' = \Spec B$ and $S = \Spec A$, where $B$
has an $A$-basis of the form $\{ 1, e \}$. Write $b_1, \dots, b_k$ for the coordinates on the second $\Aff^k_S$, so $\vct{0} = V(b_1, \dots, b_k) \subseteq \Aff^k_S$.

In the notation preceding Proposition \ref{prop:genk_equations}, we may
take $x_{i,1} = a_i$, $x_{i,2} = b_i$, and $t_i \mapsto a_i + b_ie$. The matrix of coefficients will have columns given by the $\{1, e\}$-basis representation of the images of $1, t_i$, and $t_it_j$ as $i,j$ vary over distinct integers in $\{1, \ldots, k\}$. Write $e^2 = c + de$ where $c, d \in A$. Then the column of the matrix representing $1$ is
\[
  \begin{bmatrix} 1 \\ 0 \end{bmatrix},
\]
the columns representing the images of the $t_i$ are
\[
  \begin{bmatrix} a_i \\ b_i \end{bmatrix},
\]
and the columns representing the images of $t_it_j$ are
\[
  \begin{bmatrix} a_ia_j + b_ib_jc \\ a_ib_j + a_jb_i + b_ib_jd \end{bmatrix}.
\]
Among the determinants of the $2\times2$ minors of this matrix are $b_1, \ldots, b_k$, coming from the submatrices
\[
  \begin{bmatrix} 1 & a_i \\ 0 & b_i \end{bmatrix}.
\]
The remaining determinants all lie in the ideal $(b_1,\ldots, b_k)$ since all elements of the second row of the matrix lie in this ideal.
We conclude by Proposition \ref{prop:genk_equations} that $\Gen_{k, S'/S}$ is the union of
the open subsets $D(b_i)$ of $\Spec A[a_1, b_1, \ldots, a_k, b_k]$, as required.
\end{proof}

As an alternative to taking the union of $k$-determinants, we can use a generalization of the determinant first introducted by Cayley, later rediscovered and generalized by Gel'fand, Kapranov and Zelevinsky: 

\begin{question}

The map $v^\sharp$ above is a multilinear map from the tensor product of $k$ free modules $A[x_{I \times J}, t_j]/m_j(t_j)$ of rank $n$ over $A[x_{I \times J}]$ to the rank-$n$ free module $B[x_{I \times J}]$. For $k=1$, $\Gen_1$ is the complement of the determinant of $v^\sharp$. In general, the map $v^\sharp$ is locally given by a \emph{hypermatrix} of format $(n-1, \dots, n-1)$ \cite{hyperdeterminant}. This $n \times n \times \cdots \times n$-hypercube of elements of $A[x_{I \times J}]$ describes a multilinear map the same way ordinary $n \times n$ matrices describe a linear map. What locus does the hyperdeterminant cut out in $\WR_k$?

\end{question}

Example \ref{ex:jetsofA2} addresses the case $k = 2$ for jet spaces.

\subsection{Field extensions}

When $S' = \Spec L \to S = \Spec K$ is induced by a field extension $L/K$, we know that the monogenic generators of $L$ over $K$
are precisely the elements of $L$ that do not belong to any proper subfield of $L$. Therefore, on the level of $K$-points of $\Gen_1$, we
can expect to see that the index form vanishes on precisely the proper subfields of $L$. However, it has further structure that
is better seen after extension to a larger field.

\begin{example}[A $\ZZ/2 \times \ZZ/2$ field extension]
Let $S = \Spec \QQ$ and $S' = \Spec \QQ(\sqrt{2}, \sqrt{3})$. 
The isomorphism of groups $\QQ(\sqrt{2}, \sqrt{3})\cong \QQ\oplus\QQ\sqrt{2}\oplus\QQ\sqrt{3}\oplus \QQ\sqrt{6}$ identifies the Weil Restriction $\WR_{\QQ(\sqrt{2}, \sqrt{3})/\QQ}$
and its universal maps with Spec of
\[\begin{tikzcd}
\QQ[a, b, c, d][\sqrt{2}, \sqrt{3}]         &       &\QQ[a, b, c, d][t]. \ar[ll, "a + b\sqrt{2} + c \sqrt{3} + d \sqrt{6} \mapsfrom t 
", swap]\\
        &\QQ[a, b, c, d] \ar[ur] \ar[ul]
\end{tikzcd}\]

Hence $\WR_{\QQ(\sqrt{2}, \sqrt{3})/\QQ} = \QQ[a, b, c, d]$ and the universal
morphism 
\[u : S' \times_S \WR_{\QQ(\sqrt{2}, \sqrt{3})/\QQ} \to \AA^1_{\WR_{\QQ(\sqrt{2}, \sqrt{3})/\QQ}}\] 
is induced by
\[
  t \mapsto a + b\sqrt{2} + c\sqrt{3} + d\sqrt{6}.
\]

We expand the images of the powers $1, t, t^2, t^3$ to find the matrix of coefficients
\[	
\begin{bmatrix}
1 & a & a^{2} + 2 b^{2} + 3 c^{2} + 6 d^{2} & a^{3} + 6 a
b^{2} + 9 a c^{2} + 36 b c d + 18 a d^{2} \\
0 & b & 2 a b + 6 c d & 3 a^{2} b + 2 b^{3} + 9 b c^{2} + 18
a c d + 18 b d^{2} \\
0 & c & 2 a c + 4 b d & 3 a^{2} c + 6 b^{2} c + 3 c^{3} + 12
a b d + 18 c d^{2} \\
0 & d & 2 b c + 2 a d & 6 a b c + 3 a^{2} d + 6 b^{2} d + 9
c^{2} d + 6 d^{3}
\end{bmatrix}.\]
We compute the local index form associated to our chosen basis by taking the determinant:
\begin{align*}
 \localindex(a,b,c,d) &= -8b^{4} c^{2} + 12 b^{2} c^{4} + 16 b^{4} d^{2} -36 c^{4} d^{2} -48 b^{2} d^{4} + 72 c^{2} d^{4} \\
  &= -4(2b^2 - 3c^2)(b^2 - 3d^2)(c^2 - 2d^2).
\end{align*}

Note that this determinant has degree 6. Dropping subscripts, the factorization implies that the closed subscheme of non-generators $\NGen$ inside $\WR \cong \AA^4_\QQ$ has three components of degree 2.

Consider the $\QQ$-points of $\Gen_{1,S'/S} = \WR - \NGen$.
These are in bijection with the elements $a + b\sqrt{2} + c\sqrt{3} + d\sqrt{6} \in \QQ(\sqrt{2}, \sqrt{3})$ where $a,b,c,d$ are in $\QQ$ and the index form does not vanish.
Equivalently,
\[
  2b^2 - 3c^2 \neq 0, \quad b^2 - 3d^2 \neq 0, \text{ and } \quad  c^2 - 2d^2 \neq 0.
\]

Let $\theta = a + b\sqrt{2} + c\sqrt{3} + d\sqrt{6}$ for some $a,b,c,d \in \QQ$ and consider what it would mean to fail one of these conditions. If
$2b^2 - 3c^2 = 0$ for $b, c \in \QQ$, it must be that $b = c = 0$. Then
$\theta \in \QQ(\sqrt{6})$, a proper subfield of $\QQ(\sqrt{2}, \sqrt{3})$.
Similarly, if $b^2 - 3d^2 = 0$ then $\theta \in \QQ(\sqrt{3})$, and
if $c^2 - 2d^2 = 0$ then $\theta \in \QQ(\sqrt{2})$. It follows that the
$\QQ$-points of $\Gen_{1, S'/S}$ are in bijection with the elements
of $\QQ(\sqrt{2}, \sqrt{3})$ that do not lie in a proper subfield, as
we expect from field theory.
\end{example}

See Example \ref{ex:NonMonOrderInMonogenicExt} for an analysis of the monogenicity of some orders contained in the field considered above.

\begin{example}[A $\ZZ/4\ZZ$-extension]
Let $S = \Spec \QQ(i)$ and $S' = \Spec \QQ(i, \sqrt[4]{2}).$ We have a global
$\QQ(i)$-basis $\{1, \sqrt[4]{2}, \sqrt{2}, \sqrt[4]{2}^3\}$ for $\QQ(i, \sqrt[4]{2})$ over $\QQ(i)$. We may use this basis to write $\WR \cong \Spec \QQ(i)[a,b,c,d]$ where the universal map
from $\mathbb{A}^1$ is $t \mapsto a+b\sqrt[4]{2}+c\sqrt{2}+d(\sqrt[4]{2})^3$. The matrix of coefficients is
\[
  \begin{bmatrix}
    1 & a & a^{2} + 2c^{2} + 4 b d & a^{3} + 6b^{2} c
+ 6 a c^{2} + 12a b d + 12 c d^{2} \\
    0 & b & 2 a b + 4 c d & 3 a^{2} b + 6 b c^{2} +
6 b^{2} d + 12 a c d + 4 d^{3} \\
    0 & c & b^{2} + 2 a c + 2 d^{2} & 3 a b^{2} + 3
a^{2} c + 2 c^{3} + 12 b c d + 6 a d^{2} \\
    0 & d & 2 b c + 2 a d & b^{3} + 6 a b c + 3 a^{2} d + 6 c^{2} d + 6 b d^{2}
  \end{bmatrix}.
\]

The determinant yields the local index form with respect to this basis:
\[
  (b^2 - 2d^2)(b^4 + 8c^4 - 16 b c^2 d + 4 b^2 d^2 + 4d^4).
\]

We note that the first factor vanishes for $a,b,c,d \in \QQ(i)$ when $a+b\sqrt[4]{2}+c\sqrt{2}+d\sqrt[4]{2}^3 \in \QQ(i, \sqrt{2})$. At first glance the second factor is more mysterious, but after adjoining enough elements, the entire index form factors into distinct linear terms:
\begin{align*}
 &(b - \sqrt{2}d)(b + \sqrt{2}d)(i b-(1+i) \sqrt[4]{2} c+\sqrt{2} d) (-i b-(1-i) \sqrt[4]{2} c+\sqrt{2}   d)\\ 
&\quad \quad \quad \cdot(-i b+(1-i) \sqrt[4]{2} c+\sqrt{2} d) (i b+(1+i) \sqrt[4]{2} c+\sqrt{2} d).
\end{align*}
\end{example}

This behavior of factorization into distinct linear factors occurs in general:

\begin{proposition}
Let $S' \to S$ be induced by a finite separable extension of fields $L/K$. Let $e_1, \ldots, e_n$ be a $K$-basis for $L$, and let $x_1, \ldots, x_n$ be the corresponding coordinates for $\WR$. Then the local index form $\localindex(e_1, \ldots, e_n)$ factors completely into distinct linear factors in $x_1, \ldots, x_n$ over the normal closure $\tilde{L}$ of $L/K$.
\end{proposition}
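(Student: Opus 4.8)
The plan is to extend scalars to the normal closure $\tilde L$, where the separable extension $L/K$ becomes split, and to recognize the resulting index form as a Vandermonde determinant in a suitable collection of linear forms. First I would record that the local index form is insensitive to this extension of scalars. By definition $\localindex(e_1, \ldots, e_n) = \det M$, where the $j$th column of $M$ expresses the power $\theta^{j-1}$ of the universal element $\theta = \sum_i x_i e_i$ in the basis $e_1, \ldots, e_n$; its entries $a_{ij} \in K[x_I]$ are determined purely by the structure constants of $L/K$. Viewing the same identities $\theta^{j-1} = \sum_i a_{ij}(e_i \otimes 1)$ inside $(L \otimes_K \tilde L)[x_I]$ shows that $\localindex(e_1, \ldots, e_n)$, now regarded in $\tilde L[x_I]$, is literally the local index form of the $\tilde L$-algebra $L \otimes_K \tilde L$ with respect to the basis $e_1 \otimes 1, \ldots, e_n \otimes 1$. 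It therefore suffices to factor this single polynomial over $\tilde L$.

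Next I would trivialize the extension. Since $L/K$ is finite separable it is \'etale, so the $n$ distinct $K$-embeddings $\sigma_1, \ldots, \sigma_n : L \to \tilde L$ assemble into an isomorphism of $\tilde L$-algebras $L \otimes_K \tilde L \xrightarrow{\sim} \tilde L^n$, $a \otimes b \mapsto (\sigma_k(a)b)_k$. Under it the universal element $\theta = \sum_i x_i(e_i \otimes 1)$ becomes the tuple whose $k$th coordinate is the linear form $y_k \coloneqq \sum_i \sigma_k(e_i)\, x_i \in \tilde L[x_I]$, and the standard idempotent basis $\epsilon_1, \ldots, \epsilon_n$ of $\tilde L^n$ satisfies $\theta^{j-1} = \sum_k y_k^{\,j-1}\epsilon_k$. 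Thus the matrix of coefficients of $\theta$ in the idempotent basis is the Vandermonde matrix $[\,y_k^{\,j-1}\,]_{k,j}$, exactly as in Example~\ref{ex:confspindexform}. By the change-of-basis remark following Proposition~\ref{prop:gen1_equations}, passing from the basis $\{e_i \otimes 1\}$ to $\{\epsilon_k\}$ only multiplies the determinant by a unit $u \in \tilde L^\times$, giving
\[
  \localindex(e_1, \ldots, e_n) = u \prod_{k < l}(y_l - y_k) = u \prod_{k < l} \Big( \sum_{i=1}^n \big(\sigma_l(e_i) - \sigma_k(e_i)\big)\, x_i \Big).
\]
This exhibits $\localindex$ as a product of $\binom{n}{2} = \tfrac{n(n-1)}{2}$ linear forms, matching the degree computed in Remark~\ref{rmk:nonmonhomogeneous}.

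Finally I would verify that these factors are pairwise non-proportional, which is the real content of the statement and the step I expect to be the main obstacle. Write $\ell_{kl} = \sum_i(\sigma_l(e_i) - \sigma_k(e_i)) x_i$, the linear form attached to the functional $\sigma_l - \sigma_k$ in the coordinates $x_i$. If $\ell_{kl} = c\,\ell_{k'l'}$ for some $c \in \tilde L^\times$ and distinct ordered pairs $k<l$, $k'<l'$, then $\sigma_l - \sigma_k - c\,\sigma_{l'} + c\,\sigma_{k'} = 0$ as $\tilde L$-linear functionals on $L$. Because the unordered pairs $\{k,l\}$ and $\{k',l'\}$ are distinct, a short case analysis on whether they are disjoint or share a single index shows that, after collecting terms, at least two among $\sigma_k,\sigma_l,\sigma_{k'},\sigma_{l'}$ survive with nonzero coefficient; this contradicts the linear independence of the distinct embeddings $\sigma_1, \ldots, \sigma_n$ over $\tilde L$ (Dedekind--Artin independence of characters). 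Hence the $\ell_{kl}$ are distinct (non-associate) linear factors, and the factorization above is into distinct linear factors, as claimed. The only subtlety to watch in this last step is the degenerate possibility $c=1$ in the shared-index cases, but even then one of the two unshared embeddings remains with a nonzero coefficient, so independence is still violated.
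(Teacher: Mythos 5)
Your proof is correct and follows essentially the same route as the paper: base change to $\tilde{L}$, split $L \otimes_K \tilde{L} \cong \tilde{L}^n$ via the distinct embeddings, recognize the matrix of coefficients as a Vandermonde matrix, and transport the resulting factorization back along the change of basis, which only scales the determinant by a unit. The one difference is that you verify distinctness of the linear factors explicitly via Dedekind--Artin independence of characters, whereas the paper leaves this implicit in the invertibility of the $\tilde{L}$-linear change of coordinates --- a welcome bit of extra care, not a divergence.
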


Compare this with Example \ref{ex:confspindexform}. There, monogenerators correspond to configurations of $n$ points in $\Aff^1$ and the distinct linear factors of $\localindex(e_1, \ldots, e_n)$ correspond to when pairs of points collide. Here the situation is the same except a separable field extension---geometrically, an \'etale localization---is required first. This \'etale local characterization of monogenerators is common to all \'etale $S' \to S$, a case that we will examine in more depth in the sequel paper.

Some interesting and useful specifics in the case of number fields are investigated in more depth in chapter 7 of \cite{GaalsBook}.

\begin{proof}
We may consider $\localindex(e_1, \ldots, e_n)$ as an element of $\tilde{L}[x_1, \ldots, x_n]$ by
pulling back to $\WR_{L/K} \times_S \Spec \tilde{L} \cong \WR_{L \otimes_K \tilde L/\tilde L}$.
Our strategy is to compute a second generator of the pullback of $\indexsheaf_{S'/S}$ with respect to a more convenient basis.

By the Chinese remainder theorem, $L \otimes_K \tilde{L} \cong \prod_{i = 1}^n \tilde{L}$. Let $\tilde{e}_1, \ldots, \tilde{e}_n$ be the
standard basis of $(\tilde{L})^n$, let $\tilde{x}_1, \ldots, \tilde{x}_n$ be the corresponding coordinates on $\WR_{L \otimes_K \tilde L/\tilde L} \simeq \WR_{\tilde L^n/\tilde L}$,
and let $\tilde{\theta} = \tilde{x}_1\tilde{e}_1 + \cdots + \tilde{x}_n\tilde{e}_n$.
Computing a matrix $M$ for the map $L[\tilde{x}_1, \ldots, \tilde{x}_n, t] / m(t) \to \tilde{L}[\tilde{x}_1, \ldots, \tilde{x}_n]$ sending $t \mapsto \tilde{\theta}$, we see that it is a Vandermonde matrix with factors
$\tilde{x}_1, \dots, \tilde{x}_n$, since
\[
  (\tilde{x}_1\tilde{e}_1 + \cdots + \tilde{x}_n\tilde{e}_n)^k = \tilde{x}_1^k\tilde{e}_1 + \cdots \tilde{x}_n^k\tilde{e}_n,
\]
when computed in the product ring $(\tilde{L})^n$. Therefore $|\localindex(\tilde{e}_1, \ldots, \tilde{e}_n)| = |\det(M)| = |\prod_{i < j} (\tilde{x}_i - \tilde{x}_j)|$. Applying the $\tilde{L}$-linear change of basis from
$\{\tilde{x}_i\}$ to $\{ x_i \}$, we see that $\localindex(e_1, \ldots, e_n)$ is a product of distinct linear factors in $x_1, \ldots, x_n$.
\end{proof}

The proposition above does not consider inseparable extensions. To see what can happen then, we begin with an example.

\begin{example}[A purely inseparable extension]\label{ex:inseparable} 
For $\mathbb{F}_3(\alpha)[\beta]/(\beta^3 - \alpha)$ over $\mathbb{F}_3(\alpha)$, write $a, b, c$ for the universal coefficients of the basis $1, \beta, \beta^2$. In other words, $\theta = a +b\beta+c\beta^2$. One computes that the 
index form is then 
\[b^3 - c^3\alpha.\] 
To find the monogenic generators of this extension, we look for $a, b, c \in \FF_3(\alpha)$ so that $b^3 - c^3\alpha \neq 0$. Clearly, at least one of $b, c$ must be nonzero. Choose $b, c$ arbitrarily so that one is nonzero. Is this enough to ensure we have a monogenerator?

Suppose first that $b \neq 0$. Then $b^3 - c^3\alpha = 0$
implies $c \in \FF_3(\alpha^{1/3}) \setminus \FF_3(\alpha)$, a contradiction. Symmetrically, if $c \neq 0$ and $b^3 - c^3\alpha = 0$, then $b \in \FF_3(\alpha^{1/3}) \setminus \FF_3(\alpha)$, a contradiction again. We conclude that the set of monogenerators is
\begin{align*}
  \Gen_{1,S'/S}(\FF_3(\alpha)) &= \{ a+b\beta + c\beta^2 \mid a,b,c\in\FF_3(\alpha)\text{ and }(b,c) \neq (0,0) \}. \\
    &= \FF_3(\alpha) \setminus \FF_3,
\end{align*}
as one expects from field theory.

The polynomial $b^3 - c^3\alpha$ is irreducible in $\FF_3(\alpha)[a,b,c]$, so the scheme of non-generators $\NGen$ is an irreducible subscheme of $\WR \simeq \AA^3$. However, $\NGen$ is not geometrically reduced: after base extension to $\FF_3(\alpha,\sqrt[3]{\alpha})$, the index form factors as $(b - c\beta)^3$.
\end{example}

The factorization noted above is not an isolated phenomenon:

\begin{proposition} \label{prop:pureinsepmon}
Let $S' \to S$ be induced by a degree $n\coloneqq p^m$ completely inseparable extension of fields $K(\alpha^{1/p^m})/K$. Then over $K(\alpha^{1/p^m})$, the local index form factors into a repeated linear factor of multiplicity $p^m$.
\end{proposition}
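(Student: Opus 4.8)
The plan is to mirror the proof of the separable case, replacing the split algebra $L \otimes_K \tilde{L} \cong \prod \tilde{L}$ (which there produced a Vandermonde determinant) by the totally nonreduced algebra arising in the inseparable setting; this reduces the computation to the jet-space index form already computed in Example \ref{ex:jets_in_A1}. Write $\beta \coloneqq \alpha^{1/p^m}$, so that $L = K(\beta) = K[t]/(t^{p^m} - \alpha)$ and $n = p^m$. The local index form commutes with base change and, by the change-of-basis remark following Proposition \ref{prop:gen1_equations}, is well defined up to a unit; hence it suffices to compute the index form of the extension pulled back along $\Spec L \to S = \Spec K$, with respect to any convenient $L$-basis. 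Concretely, I would view $\localindex(1, \beta, \dots, \beta^{n-1})$ as an element of $L[x_1, \dots, x_n]$ via $\WR_{L/K} \times_S \Spec L \cong \WR_{L \otimes_K L/L}$, exactly as in the separable argument.

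The essential characteristic-$p$ input is the Frobenius identity. After base change there are two commuting copies of $\beta$ inside $L \otimes_K L$: the base copy $\beta_0 \in L$ and the copy $\beta_1$ coming from $S'$ (the image of $t$), each satisfying $x^{p^m} = \alpha$. Setting $s \coloneqq \beta_1 - \beta_0$ we get $s^{p^m} = \beta_1^{p^m} - \beta_0^{p^m} = 0$, so
\[
  L \otimes_K L \;=\; L[\beta_1]/(\beta_1^{p^m} - \alpha) \;=\; L[s]/(s^{p^m}).
\]
Thus the base-changed extension $\Spec(L \otimes_K L) \to \Spec L$ is precisely the $(n-1)$-jet extension $\Spec L[s]/(s^{n}) \to \Spec L$ of Example \ref{ex:jets_in_A1}. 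The determinant computation there is purely formal, hence valid over the base $L$: with respect to the basis $1, s, \dots, s^{n-1}$ the matrix of coefficients is lower triangular with determinant $x_2^{\binom{n}{2}}$, a power of the single linear form $x_2$ (the coefficient of $s$).

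It remains to transport this factorization back to the basis $1, \beta_1, \dots, \beta_1^{n-1}$ coming from the original $K$-basis of $L$. Since $\beta_0 \in L$, the transition between the $L$-bases $\{1, \beta_1, \dots, \beta_1^{n-1}\}$ and $\{1, s, \dots, s^{n-1}\}$ is a constant unipotent triangular matrix over $L$; it multiplies the index form by a unit of $L$ and applies an invertible $L$-linear substitution to the coordinates $x_1, \dots, x_n$. Under such a substitution the linear form $x_2$ becomes some linear form $\ell$, so over $L$ the index form equals, up to a nonzero scalar, $\ell^{\binom{n}{2}}$: a single repeated linear factor, of multiplicity $\binom{p^m}{2} = \tfrac{p^m(p^m-1)}{2}$ as forced by the homogeneity in Remark \ref{rmk:nonmonhomogeneous} (in agreement with Example \ref{ex:inseparable}, where $n = 3$). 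The main obstacle is the characteristic-$p$ identification of $L \otimes_K L$ with the jet algebra together with the verification that returning to the original basis preserves a single linear factor; a direct expansion of the determinant in the $\beta$-basis is possible but becomes unwieldy as $m$ grows, which is exactly what the reduction to Example \ref{ex:jets_in_A1} circumvents.
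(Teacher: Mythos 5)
Your proof is correct and follows essentially the same route as the paper's: base change along $\Spec L \to \Spec K$, identify $L \otimes_K L \cong L[\epsilon]/(\epsilon^{p^m})$ via the Frobenius identity, invoke the jet-space computation of Example \ref{ex:jets_in_A1}, and return to the original basis by a linear change of coordinates. One remark: both your argument and the paper's proof yield a single linear factor of multiplicity $\tfrac{p^m(p^m-1)}{2}$, not the multiplicity $p^m$ literally asserted in the statement of the proposition; the two agree precisely when $p^m = 3$, the case of Example \ref{ex:inseparable}.
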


\begin{proof}
Consider the local index form $\localindex(e_1,\ldots, e_n)$ as an element of $K(\alpha^{1/p^m})[x_1, \ldots, x_n]$ by pulling back to
\[
  \WR_{K(\alpha^{1/p^m})/K} \times_S S' \cong \WR_{K(\alpha^{1/p^m}) \otimes_K K(\alpha^{1/p^m})/K(\alpha^{1/p^m})}.
\]
Once again, to arrive at the result, we will compute a second generator of the pull back of $\indexsheaf_{S'/S}$ with respect to another basis.

By the Chinese remainder theorem,
\begin{align*}
    K(\alpha^{1/p^m}) \otimes_K K(\alpha^{1/p^m}) &\cong K[t]/(t^{p^m} - \alpha) \otimes_K K(\alpha^{1/p^m}) \\
    &\cong K(\alpha^{1/p^m})[t]/((t - \alpha^{1/p^m})^{p^m}) \\
    &\cong K(\alpha^{1/p^m})[\epsilon]/\epsilon^{p^m},\\
\end{align*}
where $\epsilon = t - \alpha^{1/p^m}$. 

Let $b_1 = 1, b_2 = \epsilon, \ldots, b_{p^m} = \epsilon^{p^m - 1}$ be a basis for
$K(\alpha^{1/p^m})[\epsilon]/\epsilon^{p^m}$ over $K(\alpha^{1/p^m})$, and let $y_1, \ldots, y_n$ be corresponding coordinates on
$\WR_{K(\alpha^{1/p^m})/K} \times_S S'$. We are now in the situation of Example \ref{ex:jets_in_A1}. 
Following the calculation there, we do a second change of coordinates to the basis $c_1 = 1, c_2 = \epsilon - y_1, \ldots, c_n = (\epsilon - y_1)^{n-1}$ and let $z_1, \ldots, z_n$ be the corresponding coordinates on $\WR_{K(\alpha^{1/p^m})/K} \times_S S'$. Taking the determinant of the matrix $M$ of the map $K(\alpha^{1/p^m})[z_1, \ldots, z_n,t]/m(t) \to K(\alpha^{1/p^m})[\epsilon][z_1, \ldots, z_n]/\epsilon^{p^m}$ sending $t \mapsto z_1c_1 + \cdots + z_nc_n$
with respect to the bases $\{ 1, \ldots, t^{n-1} \}$ and $\{ c_1, \ldots, c_n \}$, we obtain
\[
  \localindex(c_1, \ldots, c_n) = z_2^{\left(\frac{p^m(p^m-1)}{2}\right)}.
\]
Applying the change of basis from $\{ z_1, \ldots, z_n \}$ to $\{ x_1, \ldots, x_n \}$, we see that $\localindex(e_1, \ldots, e_n)$
is a power of a linear term.
\end{proof}

\subsection{Orders in number rings}

\begin{example}[Dedekind's Non-Monogenic Cubic Field]\label{ex:Dedekind}
Let $\eta$ denote a root of the polynomial $X^3 - X^2 - 2X -8$ and consider the field extension $L\coloneqq  \QQ(\eta)$ over $K\coloneqq \QQ$. When Dedekind constructed this example \cite{Dedekind} it was the first example of a non-monogenic extension of number rings. Indeed two generators are necessary to generate $\Ints_L/\Ints_K$: take $\eta^2$ and $\frac{\eta + \eta^2}{2}$, for example. In fact, $\{1,\frac{\eta + \eta^2}{2},\eta^2\}$ is a $\ZZ$-basis for $\Ints_K$. 
The matrix of coefficients with respect to the basis $\{1,\frac{\eta + \eta^2}{2},\eta^2\}$ is

\[\begin{bmatrix}
1 & a & a^{2} + 6b^{2} + 16bc + 8 c^{2} \\
0 & b & 2 a b + 7 b^{2} + 24bc + 20c^{2} \\
0 & c & -2b^{2} + 2ac - 8bc - 7c^{2}
\end{bmatrix}.\]

Taking its determinant, the index form associated to this basis is \[
 -2b^3 - 15b^2c - 31bc^2 - 20c^3
.\]
Were the extension monogenic, we would be able to find $a,b,c \in \ZZ$ so
that the index form above is equal to $\pm 1$.

To see that there are no solutions, we may reduce the index form modulo 2 to obtain
\[
  b^2c + bc^2.
\]
Iterating through the four possible values of $(b,c) \in (\ZZ/2\ZZ)^2$ shows that the index form always to reduces to 0.
\end{example}

\begin{example}[A non-monogenic order and monogenic maximal order]\label{ex:NonMonOrderInMonogenicExt}

Consider the extension $\ZZ[\sqrt{2}, \sqrt{3}]$ of $\ZZ$. Note that $\ZZ[\sqrt{2},\sqrt{3}]$ is not the maximal order of $\QQ(\sqrt{2},\sqrt{3})$. As we will see below, the maximal order is $\ZZ[\sqrt{\sqrt{3}+2}]$. The isomorphism of groups $\ZZ[\sqrt{2}, \sqrt{3}] \simeq \ZZ \oplus \ZZ \sqrt{2} \oplus \ZZ \sqrt{3} \oplus \ZZ \sqrt{6}$ identifies the Weil Restriction $\WR_{\ZZ[\sqrt{2}, \sqrt{3}]/\ZZ}$
and its universal maps with Spec of
\[\begin{tikzcd}
\ZZ[a, b, c, d][\sqrt{2}, \sqrt{3}]         &       &\ZZ[a, b, c, d][t]. \ar[ll, "a + b\sqrt{2} + c \sqrt{3} + d \sqrt{6} \mapsfrom t 
", swap]\\
        &\ZZ[a, b, c, d] \ar[ur] \ar[ul]
\end{tikzcd}\]

Now, 
\[1\mapsto 1\]
\[t\mapsto a + b\sqrt{2} + c\sqrt{3} + d\sqrt{6}\]
\[t^2\mapsto (a + b\sqrt{2} + c\sqrt{3} + d\sqrt{6})^2\]
\[t^3\mapsto (a + b\sqrt{2} + c\sqrt{3} + d\sqrt{6})^3\]

is given by
\[	
\begin{bmatrix}
1 & a & a^{2} + 2 b^{2} + 3 c^{2} + 6 d^{2} & a^{3} + 6 a
b^{2} + 9 a c^{2} + 36 b c d + 18 a d^{2} \\
0 & b & 2 a b + 6 c d & 3 a^{2} b + 2 b^{3} + 9 b c^{2} + 18
a c d + 18 b d^{2} \\
0 & c & 2 a c + 4 b d & 3 a^{2} c + 6 b^{2} c + 3 c^{3} + 12
a b d + 18 c d^{2} \\
0 & d & 2 b c + 2 a d & 6 a b c + 3 a^{2} d + 6 b^{2} d + 9
c^{2} d + 6 d^{3}
\end{bmatrix}.\]
Taking the determinant, the index form with respect to our chosen basis is
\[-8b^{4} c^{2} + 12 b^{2} c^{4} + 16 b^{4} d^{2} -36 c^{4} d^{2} -48 b^{2} d^{4} + 72 c^{2}
d^{4}\]
\[	
=-4(2b^2 - 3c^2)(b^2 - 3d^2)(c^2 - 2d^2).
\]

The $\ZZ$-points of $\Gen_{1,\ZZ[\sqrt{2}, \sqrt{3}]/\ZZ}$ are
in bijection with the tuples $(a, b, c, d) \in \ZZ^4$ such that the determinant is a unit. Since the determinant is divisible
by 2, this never happens. We conclude that $\ZZ[\sqrt{2}, \sqrt{3}]$ is not monogenic over $\ZZ$.

The non-monogenicity of the order $\ZZ[\sqrt{2},\sqrt{3}]$ is in marked contrast to the maximal order of $\QQ(\sqrt{2},\sqrt{3})$, which is monogenic. 
A computation shows that a power integral basis for the maximal order is given by $\{1,\alpha,\alpha^2,\alpha^3\},$ where $\alpha$ is a root of $t^4 - 4t^2 + 1$. One could take $\alpha=\sqrt{\sqrt{3}+2}$. Here the Weil Restriction $\WR_{\ZZ[\alpha]/\ZZ}$ 
and its universal maps are identified with Spec of
\[\begin{tikzcd}
\ZZ[a, b, c, d][\alpha]         &       &\ZZ[a, b, c, d][t]. \ar[ll, "a + b\alpha + c \alpha^2 + d \alpha^3 \mapsfrom t 
", swap]\\
        &\ZZ[a, b, c, d] \ar[ur] \ar[ul]
\end{tikzcd}\]

The element-wise computation  
\[1\mapsto 1\]
\[t\mapsto a + b\alpha + c \alpha^2 + d \alpha^3\]
\[t^2\mapsto (a + b\alpha + c \alpha^2 + d \alpha^3)^2\]
\[t^3\mapsto (a + b\alpha + c \alpha^2 + d \alpha^3)^3\]

yields the matrix of coefficients
\[\begin{bmatrix}
1 & a & a^2 - c^2 - 2bd - 4d^2 
& A \\
0 & b & 2ab - 2cd 
& B \\
0 & c & b^{2} + 2 a c + 4 c^{2} + 8 b d + 15 d^{2} 
&  C \\
0 & d & 2 b c + 2 a d + 8 c d 
& D
\end{bmatrix},\]
where
\begin{align*}
    A &= a^{3} - 3 b^{2} c - 3 a c^{2} - 4 c^{3} - 6 a b d - 24 b c d - 12 a d^{2} - 45 c d^{2},     \\
    B &= 3 a^{2} b - 3 b c^{2} - 3 b^{2} d - 6 a c d - 12 c^{2} d - 12 b d^{2} - 15 d^{3},        \\
    C &= 3 a b^{2} + 3 a^{2} c + 12 b^{2} c + 12 a c^{2} + 15 c^{3} + 24 a b d + 90 b c d + 45 a d^{2} + 168 c d^{2},         \\
    D &= b^{3} + 6 a b c + 12 b c^{2} + 3 a^{2} d + 12 b^{2} d + 24 a c d + 45 c^{2} d + 45 b d^{2} + 56 d^{3}.
\end{align*}
The determinant of this matrix yields the index form
\[(b^{2} - 2c^{2} + 6bd + 9d^{2}) (b^{2} -6  c^{2} + 10  b d + 25  d^{2}) (b^{2} + 4 b d +d^{2}).\]

One can compute that the index of $\ZZ[\sqrt{2}, \sqrt{3}]$ inside of $\ZZ[\sqrt{\sqrt{3} + 2}]$ is 2. Therefore the index forms are equivalent away from the prime 2.
\end{example}

\begin{example}\label{ex:2genicOverZ}
Let $K = \QQ$, $L = K(\sqrt[3]{5^2\cdot7})$. The ring of integers $\Ints_L = \ZZ[\sqrt[3]{5^2\cdot 7}, \sqrt[3]{5\cdot 7^2}]$ is not monogenic over $\ZZ$. Let $\alpha= \sqrt[3]{5^2\cdot 7}$, $\beta = \sqrt[3]{5\cdot 7^2}$. It turns out that $\{1, \alpha, \beta\}$ is a $\ZZ$-basis for $\ZZ_L$, so the universal map may be identified with
\[\begin{tikzcd}
\ZZ_L[a, b, c]         &       &\ZZ[a, b, c][t]. \ar[ll, "a + b\alpha + c \beta \mapsfrom t", swap]\\
        &\ZZ[a, b, c] \ar[ur] \ar[ul]
\end{tikzcd}\]

Expanding
\[1\mapsto 1\]
\[t\mapsto a + b\alpha + c\beta,\]
\[t^2\mapsto (a + b\alpha + c\beta)^2\]
we find that the matrix of coefficients is
\[\begin{bmatrix}
1 & a & a^2 + 70bc \\
0 & b & 2ab + 7c^2 \\
0 & c & 2ac + 5b^2
\end{bmatrix}.\]

Computing the determinant, we get the index form $5b^3 - 7c^3$. Reducing modulo 7, we see that the index form cannot be equal to $\pm 1$, so the extension is not monogenic.

\end{example}

\subsection{Other examples}

\begin{example}\label{ex:functionfieldintegers}
We investigate the analog of the integers in Example \ref{ex:inseparable}. We keep the same notation. The base ring is $\FF_3[\alpha]$ and the extension ring is $\FF_3[\alpha][x]/(x^3 - \alpha)=\FF_3[\beta]$, where $\beta^3=\alpha$.
\[\begin{tikzcd}
\FF_3[a, b, c][\beta]         &       &\FF_3[\alpha][a, b, c][x]. \ar[ll, "a + b\beta + c \beta^2 \mapsfrom x 
", swap]\\
        &\FF_3[\alpha][a, b, c] \ar[ur] \ar[ul]
\end{tikzcd}\]
\[1\mapsto 1\]
\[x\mapsto a + b\beta + c\beta^2\]
\[x^2\mapsto (a + b\beta + c\beta^2)^2\]

is given by
\[	
\begin{bmatrix}
1 & a & a^2+2bc\alpha \\
0 & b & c^2\alpha  +2 a b  \\
0 & c & b^2+2 a c  \\
\end{bmatrix}.\]
The determinant is $b^3 - c^3\alpha$, which is not geometrically reduced: it factors as $(b - c\beta)^3$. To find the monogenerators of this extension, we set this expression equal to the units of $\FF_3[\alpha]$. Since $(\FF_3[\alpha])^*=\pm 1$, the only solutions are $b = \pm1,$ $c=0$. Thus
\[\Gen_{1,\FF_3[\beta] / \FF_3[\alpha]} (\FF_3[\alpha]) = \{a\pm \beta : a\in\FF_3[\alpha]\}.\]
We can see that, much like number rings, monogenicity imposes a stronger restriction here than it does for the extension of fraction fields.
\end{example}


\begin{example}[Jet spaces of $\Aff^2$]\label{ex:jetsofA2}
Consider an $m$-jet of $\Aff^2 = \Spec k[t, u]$ determined as in Example \ref{ex:jets_in_A1} by
\[t = a_0 + a_1 \e + a_2 \e^2 + \cdots a_m \e^m\]
\[u = b_0 + b_1 \e + b_2 \e^2 + \cdots b_m \e^m.\]
Linear changes of coordinates ensure $a_0 = b_0 = 0$ and that our jets satisfy $t^{m +1} = u^{m+1} = 0$ in $k[\e]/\e^{m+1}$. To find the matrix for the induced $k$-linear map from 
\[k[t, u]/(t^{m+1}, u^{m+1}) = \bigoplus k \cdot t^e u^f\] 
to the jets $k[\e]/\e^{m+1} = \bigoplus k \cdot \e^i$, we need the coefficient of each $\e^p$ in the expression:
\begin{align*}
    t^e u^f &= (a_1 \e + a_2 \e^2 + \cdots + a_m \e^m)^e (b_1 \e + b_2 \e^2 + \cdots + b_m \e^m)^f       \\
    &=\left(\sum_{1 \leq r \leq m} \e^r \cdot \sum_{\substack{i_1 + i_2 + \cdots + i_m = m \\
    i_1 + 2i_2 + \cdots m i_m = r}} \dbinom{e}{i_1, \dots, i_m} \prod_{t=0}^m a_t^{i_t}\right) \\
    & \quad \quad \quad \quad \quad \quad \quad \quad \cdot \left(\sum_{1 \leq s \leq m} \e^s \cdot \sum_{\substack{j_1 + j_2 + \cdots + j_m = m \\
    j_1 + 2j_2 + \cdots m j_m = s}} \dbinom{f}{j_1, \dots, j_m} \prod_{t=0}^m b_t^{j_t}\right)      \\
    &=\sum_{1 \leq p \leq m} \e^p \left(
    \sum_{\substack{i_1 + i_2 + \cdots + i_m = m \\
    j_1 + j_2 + \cdots + j_m = m \\
    (i_1 + j_1) + 2(i_2 + j_2) + \cdots + m (i_m + j_m) = p}} \hspace{-.7 cm}\dbinom{e}{i_1, \dots, i_m} \dbinom{f}{j_1, \dots, j_m} \prod_{t=0}^m a_t^{i_t} b_t^{j_t}
    \right)
\end{align*}
If $e + f > p$, the coefficient of $\e^p$ in $t^e u^f$ is again zero. If $e + f > m$, all the coefficients are zero. The corresponding $m^2 \times m$ matrix is ``lower triangular'' in this sense. 



Take $m=1$ to reduce to A. Cayley's original situation of a $2 \times 2 \times 2$ hypermatrix; compute his second hyperdeterminant $\text{Det}$ to be $a_1^2 b_1^2$. In this case, $\WR_2$ is the tangent space of $\Aff^2$, the index forms cut out the locus where both $a_1$ and $b_1$ are zero, and the hyperdeterminant cuts out the locus where \textit{either} $a_1$ or $b_1$ are zero. 

Computability is a serious constraint for even simple cases. Taking $\Aff^3$ and $m=1$ yields a $2 \times 2 \times 2 \times 2$ hypermatrix. The formula for such a hyperdeterminant is degree 24 and has 2,894,276 terms \cite[Remark 5.7]{hyperdeterminant}. 

\end{example}

\begin{example}[Limits and Colimits]

Let $B$ be an $A$-algebra which is complete with respect to $I \subseteq B$. If each $B_m \coloneqq   B/I^m$ is finite locally free over $A$ and $X \to \Spec A$ is quasiprojective, there are \emph{affine} restriction maps $\Gen_{X, B_{m+1}/A} \to \Gen_{X, B_m/A}$. One can define
\[\Gen_{X, B/A} \coloneqq   \lim_m \Gen_{X, B_m/A},\]
which is a scheme \cite[01YX]{sta}. By \cite[Remark 4.6, Theorem 4.1]{bhatttannakaalgn}, this limit parametrizes closed embeddings $s : \Spec B \to X$ over $\Spec A$ as in Definition \ref{def:defnofmonogenweilrestn}. The arc space examples $k \adj{t}/k$, $k \adj{x, y}/k$ were mentioned in Example \ref{ex:jetsps}.

We cannot make a similar statement for colimits of algebras. Suppose $\{ B_i \}$ is a diagram of $A$-algebras indexed by $\NN$. Then for each $i < j$ there is a natural map $\WR_{B_i/A} \to \WR_{B_j/A}$. Notice that if the image of some $\theta \in B_i$ is a monogenerator of $B_j$, then $B_i \to B_j$ is surjective. It follows that $\WR_{B_i/A} \to \WR_{B_j/A}$ only takes $\Gen_{B_i/A}$ into $\Gen_{B_j/A}$ if $\Spec B_j \to \Spec B_i$ is a closed immersion over each open set $U \subseteq \Spec A$ over which $\Gen_{B_i/A}$ is non-empty. Assuming $\Gen_{B_0/A}$ is locally non-empty, the only diagrams $\{ B_i \}$ for which the colimit $\colim_i \Gen_{B_i/A}$ can even
be formed are those for which each $B_i \to B_j$ is surjective. Since $B_0$ is Noetherian, all such diagrams are eventually constant and uninteresting.

\end{example}


\section{Finite flat algebras with monogenerators}\label{ss:finflat+gen}

We mention a related moduli problem and how it fits into the present schema. We rely on a classical representability result:

\begin{theorem}[{\cite[Theorem 5.23]{fgaexplained}}]\label{thm:homrepable}
If $X \to S$ is flat and projective and $Y \to S$ quasiprojective over a locally noetherian base $S$, the functor $\HHom_S(X, Y)$ is representable by an $S$-scheme. 
\end{theorem}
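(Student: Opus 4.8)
The plan is to represent $\HHom_S(X,Y)$ as an open subscheme of a Hilbert scheme via the graph construction, following Grothendieck. First I would record the functorial correspondence between morphisms and their graphs. For any $S$-scheme $T$, a $T$-morphism $f : X_T \to Y_T$ determines its graph $\Gamma_f = (\id, f) : X_T \to X_T \times_T Y_T$, which is a closed immersion since $Y \to S$ is separated (being quasiprojective). Its image $Z_f \subseteq (X \times_S Y) \times_S T$ is flat over $T$ (it is isomorphic to $X_T$, and $X$ is flat over $S$) and proper over $T$ (since $X$ is projective, hence proper, over $S$), and the first projection $Z_f \to X_T$ is an isomorphism by construction. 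Conversely, any closed subscheme $Z \subseteq (X\times_S Y)\times_S T$ that is proper and flat over $T$ and whose first projection $Z \to X_T$ is an isomorphism arises as the graph of $f = p_2 \circ (p_1|_Z)^{-1}$. Thus $\HHom_S(X,Y)$ is identified with the subfunctor of the Hilbert functor of $X \times_S Y$ over $S$ picking out those subschemes on which $p_1$ restricts to an isomorphism, compatibly with base change.

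Next I would secure existence of an ambient Hilbert scheme. Since $Y \to S$ is quasiprojective, choose an open immersion $Y \hookrightarrow \bar Y$ into an $S$-projective scheme. Then $X \times_S \bar Y \to S$ is projective, so Grothendieck's existence theorem provides $\Hilb_{X \times_S \bar Y/S}$ as a disjoint union of projective $S$-schemes indexed by Hilbert polynomials. Graphs of morphisms $X \to Y$ are exactly the subschemes $Z \subseteq X \times_S \bar Y$ that (i) avoid the boundary $X \times_S (\bar Y \setminus Y)$ and (ii) project isomorphically onto $X$. Condition (i) is open on the base: the boundary is closed in $X \times_S \bar Y$, and because the universal subscheme is proper over $H \coloneqq \Hilb_{X \times_S \bar Y/S}$, the image in $H$ of its intersection with the boundary locus is closed, so avoiding the boundary is the complementary open condition.

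The heart of the argument, and the step I expect to be the main obstacle, is showing condition (ii) is open. Writing $\mathcal{Z}$ for the universal subscheme over $H$, consider the projection $q : \mathcal{Z} \to X \times_S H$, a morphism between two schemes that are both proper and flat over $H$. I would invoke the fiberwise criterion that the locus of $h \in H$ over which $q_h$ is an isomorphism is open, a semicontinuity statement of the same type as the openness of the closed-immersion locus used in the proof of Proposition \ref{prop:genisrep}; concretely one controls the closed-immersion and surjectivity behavior of $q$ on fibers via cohomology and base change together with the properness of $q$. Restricting the universal family to the intersection of this open locus with the boundary-avoidance locus yields an open subscheme of $H$ whose tautological family is the graph of a universal morphism $X \to Y$; this open subscheme represents $\HHom_S(X,Y)$. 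The remaining verification that the functor-of-points bijection is compatible with arbitrary base change, so that we have genuinely represented the Hom functor, is then formal.
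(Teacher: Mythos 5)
Your proposal is correct, but note that the paper itself gives no proof of this statement: it is imported wholesale from the literature, cited as \cite[Theorem 5.23]{fgaexplained}. Your argument --- realizing $\HHom_S(X,Y)$ via graphs as an open subscheme of $\Hilb^{}_{X \times_S \bar{Y}/S}$, with flatness of $X \to S$ guaranteeing graphs are points of the Hilbert functor and openness of the fiberwise-isomorphism locus as the key lemma --- is precisely the standard proof appearing in that cited source, so the two approaches agree.
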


The scheme $\HHom_S(X, Y)$ is a potentially infinite disjoint union of quasiprojective $S$-schemes. 

Fix a flat, projective map $C \to S$ and quasiprojective $X \to S$. Assign to any $S$-scheme $T$ the groupoid of finite flat maps $Y \to C \times_S T$ of degree $n$. One can think of this as a $T$-indexed family of finite flat maps $Y_t \to C$. This problem is represented by
\[\WR_{\frak A_n, C/S} \coloneqq \HHom_S(C, \frak A_n).\]
We study moduli of finite flat maps $Y \to C \times_S T$ \emph{together with} a choice of monogenerator:

\begin{definition}\label{def:relgenvariant}
The moduli problem $\scr F$ on $S$-schemes $\Sch{S}$ has $T$-points given by:
\begin{itemize}
    \item A finite, flat family $Y \to C \times_S T$ of degree $n$,
    \item A closed embedding $Y \subseteq X \times_S C \times_S T$ over $C \times_S T$.
\end{itemize}
These data form a fibered category via pullback. Define a variant $\scr F'$ parameterizing the data above together with a global basis $\cal Q \simeq \OO_T^{\oplus n}$ for the finite, flat algebra $\cal Q$ corresponding to $Y \to C \times_S T$.  
\end{definition}

The map $\scr F' \to \scr F$ forgetting the basis is a torsor for the smooth group scheme $\HHom_S(C, \GL_n)$. Let $X_C$ denote the pullback $X \times_S C$. The stack $\scr F$ is the Weil Restriction $\WR_{\Hilb_{X_C/C}^n, C/S}$ of the Hilbert Scheme $\Hilb_{X_C/C}^n$ for $X_C \to C$ along the map $C \to S$. Both are therefore representable by \emph{schemes} using the theorem. One must use caution: $\Hilb_{X_C/C}^n$ is an infinite disjoint union of projective schemes indexed by Hilbert polynomials and not itself projective, but this suffices for representability.

There are universal finite flat maps
\[\begin{tikzcd}
\tilde Z \ar[d]        &       &\tilde Y \ar[d]       \\
C \times_S \HHom_S(C, \frak B_n)         &       &C \times_S \HHom_S(C, \frak A_n),
\end{tikzcd}\]
with and without a global basis $\cal Q \simeq \OO_T^{\oplus n}$. The sheaf $\scr F$ may also be obtained by the Weil Restriction along $C \times_S \HHom_S(C, \frak A_n) \to \HHom_S(C, \frak A_n)$ of the monogenicity space $\Gen_{X, \tilde Y/C \times_S \HHom_S(C, \frak A_n)}$. The same construction of $\scr F'$ can be obtained with $\frak B_n$ in place of $\frak A_n$. 

We argue $\HHom_S(C, \frak A_n)$ is also representable by an algebraic stack. Olsson's result \cite[Theorem 1.1]{olssonhomstack} does not apply here because $\frak A_n$ is not separated. This means the diagonal $\Delta_{\frak A_n}$ is not proper, and this diagonal is a pseudotorsor for automorphisms of the universal finite flat algebra. The automorphism sheaf $\AAut(\cal Q)$ of some finite flat algebras is not proper: take the 2-adic integers $\ZZ_{2}$, $\cal Q = \ZZ_{2}[x]/x^2$, and the map $\cal Q \to \cal Q$ sending $x \mapsto 2x$. This is an automorphism over the generic point $\QQ_2 = \ZZ_{2}[\frac{1}{2}]$ and the zero map over the special point $\ZZ/2\ZZ = \ZZ_{2}/2 \ZZ_2$.

The scheme $\frak B_n$ on the other hand is a closed subscheme of an affine space, hence separated. The Weil Restriction $\HHom_S(C, \frak B_n)$ is a scheme by the above theorem and the map 
\[\HHom_S(C, \frak B_n) \to \HHom_S(C, \frak A_n)\]
is again a torsor for the smooth group scheme $\HHom_S(C, \GL_n)$. Therefore $\HHom_S(C, \frak A_n)$ is algebraic. 

The diagonal $\Delta_{\frak A_n}$ even fails to be quasifinite because some finite flat algebras have infinitely many automorphisms:

\begin{example}[Infinite automorphisms]
The dual numbers $k[\e]/\e^n$ have an action of $\GG_m$ by $\e \mapsto u\cdot \e$ for a unit $u \in \GG_m(k)$. 

For another example, let $k$ be an infinite field of characteristic three and consider $\cal Q = k[x, y]/(x^3, y^3-1)$. Because $(y + x)^3 = y^3$, there are automorphisms $y \mapsto y + u x$ for any $u \in k$.

\end{example}

The reader may define \emph{stable algebras} $\cal Q$ as those with unramified automorphism group \cite[0DSN]{sta}. There is a universal open, Deligne-Mumford substack $\tilde{\frak A}_n \subseteq \frak A_n$ of stable algebras \cite[0DSL]{sta}. This locus consists of points where the action $\GL_n \action \frak B_n$ has unramified stabilizers \cite[\S 2]{poonenmodspoffiniteflat}.


\printbibliography

\end{document}